\definecolor{halfgray}
{gray}{0.55}
\definecolor{webgreen}
{rgb}{0,0.4,0}
\definecolor{webbrown}
{rgb}{.8,0.1,0.1}
\definecolor{red}
{rgb}{1,0,0}
\newcommand \R {{ \mathbb R}}
\newcommand*{\diff}{\mathop{}\!\mathrm{d}}
\newcommand{\SL}{%
\operatorname{SL}
}
\newcommand{\PSL}{%
\operatorname{PSL}
}
\DeclareMathOperator{\Leb}{Leb}
\newtheorem{theorem}{Theorem}
\newtheorem {lemma}[theorem]{Lemma}
\newtheorem {proposition}[theorem]{Proposition}
\newtheorem{definition}[theorem]{Definition}
\date{\today}
\author{Davide Ravotti}
\address{
Monash University, School of Mathematics \\ Clayton Campus, 3800 Victoria, Australia
}
\email{davide.ravotti@gmail.com\\}
 \title[Polynomial mixing for time-changes of unipotent flows]
 {Polynomial mixing for time-changes of unipotent flows}
\begin{document}
\maketitle

\begin{abstract}
Let $G$ be a connected semisimple Lie group with finite centre, and let $M= \Gamma \backslash G$ be a compact homogeneous manifold. Under a spectral gap assumption, we show that smooth time-changes of any unipotent flow on $M$ have polynomial decay of correlations. Our result applies also in the case where $M$ is a finite volume, non-compact quotient under some additional assumptions on the generator of the time-change. This generalizes a result by Forni and Ulcigrai (JMD, 2012) for smooth time-changes of horocycle flows on compact surfaces.
\end{abstract}

\section{Introduction}

In dynamical systems, the term \emph{parabolic} is often used to refer to (for us, continuous-time) systems for which nearby orbits diverge \lq\lq slowly\rq\rq.
Smooth parabolic flows have an intermediate chaotic behaviour: on one hand, they tend to exhibit properties typical of strongly chaotic systems, such as mixing and absolutely continuous spectrum, on the other, they have zero topological and metric entropy.
Classical examples of parabolic systems in homogeneous dynamics are unipotent flows on semisimple Lie groups and nilflows on nilmanifolds. Smooth area-preserving flows on higher genus surfaces can also be classified as (non-uniformly) parabolic.
We refer the reader to \cite[Chapter 8]{HasKat} for an extensive discussion on parabolic phenomena.

In part due to the lack of a unified theory of parabolic dynamics, and towards a better understanding of its common features, there has been an increasing interest in studying new non-homogeneous parabolic systems. 
Important sources of examples are smooth perturbations of homogeneous flows. 
Perhaps the simplest type of perturbation is given by performing a \emph{time-change}, or a \emph{time-reparametrization}: leaving the orbits unchanged, one varies smoothly the speed of the points. 
Despite their apparent simplicity, time-changes can alter significantly the ergodic properties of the original flow.
For example, although nilflows are never weak-mixing, generic non-trivial time-changes of ergodic nilflows are mixing \cite{afu:heisenberg, fornikan, ravotti:nilflow, afru}. Different types of perturbations, including skew-product constructions \cite{simonelli:spectrum} and others \cite{ravotti:sl3}, have been investigated as well.

In this paper, we focus on smooth time-changes of unipotent flows.
The case of horocycle flows is better understood than the general case. 
A classical result due to Marcus \cite{marcus:horocycle}, generalizing a previous work by Kushnirenko \cite{Kus}, shows that smooth time-changes on compact surfaces are mixing. More recently, Tiedra de Aldecoa \cite{tiedra:spectrum} showed that they have absolutely continuous spectrum, and, independently and at the same time, Forni and Ulcigrai \cite{forniulcigrai:timechanges} proved that the spectrum is Lebesgue. This result on the spectral type has been generalized by Simonelli to smooth time-changes of ergodic unipotent flows \cite{simonelli:spectrum}.
Finer properties, including the countable multiplicity of the spectrum and other remarkable rigidity results, have been investigated by several authors \cite{ratner2, ratner3, ffk, KLU, ffmob}. 

Much less is known about their quantitative properties. 
In the case of nilflows, Forni and Kanigowski proved polynomial mixing for generic time-changes of a full-dimensional set of Heisenberg nilflows \cite{fornikan}. This is the only quantitative mixing result for time-changes of nilflows.
Going back to the horocycle flow, Forni and Ulcigrai showed that the mixing rate of smooth time-changes on compact surfaces is polynomial \cite{forniulcigrai:timechanges}. More precisely, they proved that the correlations of smooth observables can be bounded by the rate of equidistribution of sheared geodesic segments. Building on their result, Kanigowski and the author showed that the rate of 3-mixing is also polynomial \cite{ravottikanig}.
It is currently not known whether the estimates in \cite{forniulcigrai:timechanges} are optimal.

In this paper, we generalize the result by Forni and Ulcigrai to time-changes of general unipotent flows on finite volume manifold, which are not necessarily compact.
We summarize our result as follows, for a precise statement see Theorem~\ref{thm:main}.

\begin{theorem}\label{thm:main_0}
Let $G$ be a connected semisimple Lie group with finite centre and no compact factor, and let $\Gamma < G$ be an irreducible lattice. Let $\{ h_t\}_{t \in \R}$ be a unipotent flow on $M = \Gamma \backslash G$, equipped with the normalized Haar measure $\mu$. Let $\tau \colon M \to \R_{>0}$ be a positive smooth function, and let $\{h^\tau_t\}_{t \in \R}$ be the time-change induced by $\tau$. If the time-change is admissible (see Definition \ref{def:admiss}), then there exists $0<\alpha<1$ and, for all $f, g \in \mathscr{C}^{\infty}_c(M)$, there exists a constant $C_{f,g} \geq 0$ such that for all $t \geq 1$ we have
$$
\left\lvert \int_M f \circ h^{\tau} \cdot g \diff \mu^{\tau} - \mu^{\tau}(f) \mu^{\tau}(g) \right\rvert \leq C_{f,g} t^{-\alpha}.
$$
\end{theorem}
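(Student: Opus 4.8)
\emph{Renormalization setup.} The plan is to follow the renormalization strategy of Forni--Ulcigrai, carried out inside a copy of $\SL_2(\R)$. Since $G$ is semisimple, the Jacobson--Morozov theorem embeds the nilpotent generator $U$ of $\{h_t\}$ in an $\mathfrak{sl}_2$-triple $(U,A,V)$, yielding an $\R$-diagonalizable one-parameter subgroup $\{a_s=\exp(sA)\}$ with $\Ad(a_s)U=e^{2s}U$, hence $a_s h_t a_{-s}=h_{te^{2s}}$. A direct computation then shows that $a_s$-conjugation carries the time-change into another time-change, $a_s\circ h^\tau_t=h^{\tau_s}_t\circ a_s$ with $\tau_s=e^{-2s}(\tau\circ a_{-s})$, and that $\mu^{\tau_s}=(a_s)_*\mu^\tau$ and $\mu^{\tau_s}(f\circ a_{-s})=\mu^\tau(f)$; consequently the correlation coefficient transforms covariantly, with the correlation of $(f,g)$ for $h^\tau$ at time $t$ equal to that of $(f\circ a_{-s},g\circ a_{-s})$ for $h^{\tau_s}$ at time $t$, for every $s$. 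Because $\tau_s$ has size $e^{-2s}$, the renormalized flow $h^{\tau_s}$ moves a distance $\asymp te^{2s}$ along $h$-orbits in time $t$: this is the mechanism that trades \lq\lq mixing at time $t$\rq\rq\ for \lq\lq equidistribution of a long $h$-orbit arc\rq\rq. I would also record at this stage that, in general $\mathfrak g$, the $\Ad(A)$-eigenspaces of large negative weight make the higher derivatives of $\tau_s$ grow, so the renormalization time $s$ cannot be taken arbitrarily large for fixed data; this is one source of the eventual cap on the exponent.

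\emph{From mixing to equidistribution of renormalized arcs.} Next I would run the Fubini-along-orbits argument. Cover $M$ by small flow boxes for $h^\tau$, so that $\mu^\tau$ disintegrates as (transverse measure)$\,\times\,$(arc length), and write the correlation integral as an average over a transversal of the quantities $\int_t^{t+\eta}f(h^\tau_\rho\xi)\diff\rho$, up to an error $O(\eta\,\norm{g}_{C^1}\norm{f}_{C^0})$ from the variation of $g$ along the short arc. Applying the renormalization identity with $s=s(t)$ of order $\log t$ and changing variables from the time-change parameter $\rho$ to the $h$-time $w$ turns $\int_t^{t+\eta}f(h^\tau_\rho\xi)\diff\rho$ into $e^{-2s}\int_0^{L}\big((f\tau)\circ a_{-s}\big)(h_w z)\diff w$ for a base point $z=z(\xi,t)$ and a length $L=L(\xi,t)\asymp\eta e^{2s}$ that is a fixed power of $t$. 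The function $(f\tau)\circ a_{-s}$ varies fast (at rate $e^{Cs}$) in the expanding directions of $a_{-s}$, but since we integrate along $U$, which lies in the expanding horospherical subgroup of $a_s$, one may first replace it by its average over a unit-scale piece of $U$-orbit, at negligible cost, obtaining a function with controlled Sobolev norm. After rearranging the measure factors (arranged so that the leading term rebuilds $\mu^\tau(f)\mu^\tau(g)$), everything is reduced to an \emph{effective equidistribution, averaged over the transversal}, of the renormalized arcs $\{h_wz(\xi,t):0\le w\le L\}$, with a polynomial rate.

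\emph{Effective equidistribution from the spectral gap.} To establish this I would decompose $L^2_0(M,\mu)$ as a direct integral of irreducible unitary representations of the subgroup $H\cong\SL_2(\R)$ (or $\PSL_2(\R)$) generated by the triple. In each irreducible $H$-module the ergodic integral $\int_0^L\Phi(h_w\cdot)\diff w$ obeys the classical $\SL_2$-asymptotics (Ratner; Flaminio--Forni; Str\"ombergsson), with an error of the form $\norm\Phi_{W^{k,2}}L^{-\delta(\pi)}$ in which $\delta(\pi)$ degenerates only as $\pi$ approaches the trivial representation; the spectral gap hypothesis bounds $\delta(\pi)$ away from $0$ uniformly, giving a uniform rate $L^{-\delta}$ -- equivalently this is quantitative mixing of $\{a_s\}$, i.e.\ exponential decay of matrix coefficients, combined with thickening in the expanding horospherical subgroup of $a_s$. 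One must still bound the measure of transversal points $\xi$ whose renormalized arc is badly distributed; when $M$ is non-compact the long arcs may drift into the cusp, and here quantitative non-divergence of unipotent orbits together with the admissibility hypothesis (Definition~\ref{def:admiss}) on the generator of the time-change is used to control how long these arcs spend near infinity, so that the equidistribution estimate persists on a set of transversal points of large measure.

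\emph{Balancing and conclusion.} Finally I would collect the errors: the flow-box term $O(\eta)$; the distortion incurred when renormalizing the short arc and pushing the flow box by $a_s$; the equidistribution term, of the form $L^{-\delta}$ after the unit-scale $U$-smoothing, against the loss $e^{Cs}$ already spent; and, in the non-compact case, the cusp-excursion term. Writing $L\asymp\eta e^{2s}$ and optimizing $\eta=\eta(t)$ and $s=s(t)\sim c\log t$ so that these contributions are comparable produces a net bound $C_{f,g}\,t^{-\alpha}$ with an explicit $\alpha\in(0,1)$ depending on the spectral gap (through $\delta$) and on the $\SL_2$-module structure of $\mathfrak g$ (through $C$). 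I expect the main difficulty to be precisely this balancing together with the averaged-equidistribution estimate it rests on: making the gain $L^{-\delta}$ from long-arc equidistribution outweigh the distortion introduced by renormalizing for time $s\sim\log t$, which forces a careful choice of the number of derivatives used and of the smoothing scales. The other genuinely new point, absent in the horocycle case, is the non-compact setting -- quantitative control of the recurrence of the long renormalized arcs to compact sets, which is where the admissibility assumption does its work -- and, throughout, the higher-weight eigenspaces of $\Ad(A)$, which complicate both the geometry of the renormalization and the thickening step.
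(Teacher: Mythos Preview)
Your proposal follows a genuinely different route from the paper. You renormalize by conjugating with the geodesic flow $a_s$ and then analyze long $h$-arcs carrying the test function $(f\tau)\circ a_{-s}$; the paper instead runs a direct \emph{mixing via shearing} argument: it averages the correlation over a short geodesic segment of length $\sigma\sim t^{-\gamma/2}$, integrates by parts, and reduces to bounding
\[
A_{t,s}f(x)=\int_0^s f\circ h^\tau_t\circ\phi^X_r(x)\,\diff r.
\]
The commutation relation $h^\tau_t(\phi^X_r x)=\phi^X_r\big(h_{e^r u(x_r,t)}(x)\big)$ shows this geodesic arc is sheared into an $h$-arc of length $\asymp\sigma t$, and the integrand is (essentially) the \emph{original} function $\tau f$. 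The needed input is then only an $L^2$-bound on unipotent ergodic integrals, obtained from polynomial mixing of $\{h_t\}$ via Fubini and Chebyshev, yielding pointwise control off a set of measure $t^{-\gamma}$.

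The gap in your scheme is the balancing step. After renormalizing by $a_s$ your test function is $(f\tau)\circ a_{-s}$, whose derivatives in the directions contracted by $a_s$ (in particular the opposite nilpotent $V$, and more generally every negative $\Ad(A)$-weight space) blow up like $e^{Cs}$. Your proposed cure---averaging over a unit-scale $U$-orbit---does not tame these: $U$-averaging commutes with $V$ only up to lower-order terms and leaves the $V$-derivative of size $e^{2s}$. Any equidistribution or mixing input you can invoke (Flaminio--Forni for the $\SL_2$-subgroup, or the matrix-coefficient decay in the paper's Theorem~\ref{thm:exp_mix}) charges an $H$- or $G$-Sobolev norm that sees those directions, so the loss is at least $e^{2ks}$ with $k\ge1$, whereas the gain from the long arc is only $L^{-\delta}$ with $\delta<1$ and $L\asymp\eta e^{2s}$. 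The inequality $k<\delta$ required for your balance fails. The paper's shearing argument sidesteps this completely: no composition with $a_{-s}$ ever appears, so there is no Sobolev blowup to fight.

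A second point: you attribute the role of admissibility to controlling cusp excursions of long arcs via quantitative non-divergence. In the paper it is much more elementary. Admissibility means $\tau,\tau^{-1},X\tau,X^2\tau\in L^\infty$; this is used only to get uniform bounds $m_\tau^{-1}t\le u(x,t)\le m_\tau t$ and uniform control of the shear $v(x,r,t)$ and its $r$-derivative (Lemmas~\ref{thm:shear}, \ref{thm:distortion}). The non-compact case requires no separate recurrence argument: the $L^2$-ergodic-integral bound comes straight from mixing and holds on all of $M$, and Chebyshev produces the good set of large measure without any reference to the cusp geometry.
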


The admissibility condition in the statement of Theorem \ref{thm:main_0} is introduced in Definition \ref{def:admiss}. We point out that, in the case of a compact space $M$, any positive smooth function induces an admissible time-change. 
When the space is non-compact, however, we need to impose some non-degeneracy condition on the behaviour of the generator $\tau$ in the cusps. A similar assumption appears in the works of Kushnirenko \cite{Kus} and of Simonelli \cite{simonelli:spectrum}. 

Not surprisingly, the proof of Theorem \ref{thm:main_0} follows a \lq\lq mixing via shearing\rq\rq\ approach, analogous to the strategy employed by Forni and Ulcigrai in \cite{forniulcigrai:timechanges}, which dates back to Marcus \cite{marcus:horocycle}. The idea of studying mixing and other strong chaotic properties of smooth parabolic flows by analyzing the action on transverse arcs has been used successfully in several different settings.
In order to make it effective and prove quantitative results, one needs good control on the growth of ergodic integrals. The main difficulty in the setting of this paper is that no good renormalization is known for non-horospherical unipotent flows, so that no general pointwise estimate on the deviations of their ergodic averages is available. The idea is then to replace pointwise estimates with $L^2$-estimates, which are sufficient for mixing. More precisely, we exploit pointwise polynomial bounds on set of polynomially small measure, see Proposition \ref{thm:bound_erg_int}. In turn, the $L^2$-bounds on ergodic integrals are deduced easily from quantitative mixing estimates for the unipotent flow, which, under a spectral gap assumption, are well-known.


\section{Preliminaries and statement of the main result}

In this section, we recall some basic notions on unipotent flows and the Jacobson-Morozov Theorem. We state an important result on their quantitative mixing properties, which follows from the work several authors on effective decay of matrix coefficients for unitary representations of semisimple groups. We then introduce the notion of admissibility for smooth time-changes and in \S\ref{sec:main_result} we state our main result.

\subsection{Unipotent flows and the Jacobson-Morozov Theorem}Let $G$ be a semisimple Lie group with finite centre, and let $\Gamma < G$ be a lattice subgroup. Denote by $M $ the homogeneous space $ \Gamma \backslash G$. We remark again that we do not assume $M$ to be compact. 
The Haar measure on $G$ descends to a finite measure $\mu$ on $M$, which we will assume to be normalized to a probability measure. It is invariant by the right-action of $G$ on $M$.

Elements of the Lie algebra $\mathfrak{g}$ of $G$ are in one-to-one correspondence with 1-parameter subgroups of $G$, namely any 1-parameter subgroup of $G$ is of the form  $\{ \exp(tV) : t \in \R\}$ for some $V \in \mathfrak{g}$.
The homogeneous flow $\{\phi^V_t\}_{t \in \R}$ generated by $V \in \mathfrak{g} \setminus \{0\}$ is the smooth flow on $M$ defined by the restriction of the right-action of $G$ to the corresponding 1-parameter subgroup. Explicitly, it is given by
$$
\phi^V_t(\Gamma g) = \Gamma g \exp(tV).
$$
An element $U \in \mathfrak{g}\setminus \{0\}$ is \emph{unipotent}, and the corresponding $\{\phi^U_t\}_{t \in \R}$ is a \emph{unipotent flow}, if $\mathfrak{ad}_U = [U, \cdot]$ is a (non-zero) nilpotent linear operator on $\mathfrak{g}$.

Let us fix a unipotent flow $h_t = \phi_t^U$ induced by $U\in \mathfrak{g} \setminus \{0\}$.
The Jacobson-Morozov Theorem ensures the existence of a subalgebra of $\mathfrak{g}$ containing $U$ which is isomorphic to $\mathfrak{sl}_2(\R)$. In particular, there exists an element $X \in \mathfrak{g} \setminus \{0\}$ such that $[X,U] = -U$. 
By taking the exponential, we have the commutation relation
\begin{equation}\label{eq:commutation}
h_t \circ \phi^X_r (x) = \phi^X_r \circ h_{e^r t} (x)
\end{equation}
which holds for all $x \in M$ and $t,r \in \R$.
By analogy to $\SL(2,\R)$, we will sometimes call the homogeneous flow $\{\phi^X_t\}_{t \in \R}$ the \emph{geodesic flow}.


\subsection{Effective decay of matrix coefficients}

Denote by $H$ the Hilbert space $L^2(M)$ and let $H_0$ be the subspace of $H$ consisting of functions with zero average. 
We say that $M$ satisfies the \emph{strong spectral gap assumption} if the regular representation $\rho_0$ of $G$ on $H_0$ has a strong spectral gap; that is, the restriction of $\rho_0$ to any compact factor of $G$ is isolated from the trivial representation.
The strong spectral gap assumption is known to hold if, for example, $G$ is a semisimple group with finite centre and without compact factors and $\Gamma$ is an irreducible lattice \cite{KleMar, KelSar}, or when $G$ admits a simple factor of rank at least two which acts ergodically on $M$.

It is known from the work of Harish-Chandra \cite{HarCha}, Borel and Wallach \cite{BW}, Cowling \cite{Cow}, Howe \cite{How}, Moore \cite{Moo}, Katok and Spatzier \cite{KS}, and others, that the spectral gap condition provides explicit estimates on the decay of matrix coefficients for $\rho_0$. 
In the case of $\SL(2,\R)$, Ratner \cite{ratner2} established sharp bounds for general H{\" o}lder observables.
Theorem \ref{thm:exp_mix} below contains the bounds we need for our purposes, we refer the reader to the references mentioned above, as well as the work of Bj{\" o}rklund, Einsiedler, and Gorodnik \cite{BEG}, for precise effective statements on mixing and multiple mixing.
In the following, we will write $f \circ h_t$ for $\rho_0(\exp(tU))f$, and we will denote by $H^{\infty}$ (and by $H^{\infty}_0$) the subspace of $H$ (of $H_0$, respectively) of smooth vectors for the action of $G$.

\begin{theorem}\label{thm:exp_mix}
Assume that $M$ satisfies the strong spectral gap assumption. There exist $B, \beta >0$ and, for all $f,g \in H^{\infty}$ there exist $S(f), S(g) \geq 0$ (which depend on the $L^2$-norms of finitely many derivatives of $f$ and $g$ respectively) such that for all $t\geq 1$ we have
$$
\left\lvert \langle f \circ h_t, g \rangle - \mu(f) \mu(g)\right\rvert \leq B S(f)S(g) t^{-\beta}.
$$
\end{theorem}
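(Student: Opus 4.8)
The statement to prove is a quantitative mixing bound: under the strong spectral gap assumption, matrix coefficients $\langle f\circ h_t, g\rangle$ of smooth vectors decay polynomially in $t$, with a rate $\beta>0$ and a constant controlled by Sobolev norms $S(f), S(g)$.

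Let me sketch how I would prove this.

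The plan is to reduce the statement to the classical effective decay of matrix coefficients for unitary representations of semisimple Lie groups with a spectral gap, which is exactly the body of work cited before the theorem (Harish-Chandra, Cowling, Howe--Moore, Katok--Spatzier, etc.). First I would reduce to the zero-average subspace: writing $f = \mu(f) + f_0$ and $g = \mu(g) + g_0$ with $f_0, g_0 \in H_0^\infty$, bilinearity gives $\langle f\circ h_t, g\rangle - \mu(f)\mu(g) = \langle f_0\circ h_t, g_0\rangle$, so it suffices to bound $|\langle f_0\circ h_t, g_0\rangle|$ for smooth vectors of zero average. Here $f_0\circ h_t$ means $\rho_0(\exp(tU))f_0$, and $\exp(tU)$ is a unipotent one-parameter subgroup, hence in particular a sequence of group elements leaving every compact set of $G$ (since $U$ generates a non-precompact subgroup).

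Next I would invoke the quantitative Howe--Moore phenomenon. By the strong spectral gap assumption, $\rho_0$ is isolated from the trivial representation on every factor (in the appropriate sense), so $\rho_0$ is, up to tensoring with a fixed power, weakly contained in a multiple of the regular representation of $G$; equivalently, its $K$-finite (or smooth) matrix coefficients lie in $L^{p}(G)$ for some fixed $p<\infty$ depending only on the spectral gap. The work of Cowling, Howe, and Moore, refined by Katok--Spatzier and the references cited, then yields an explicit pointwise estimate of the form
$$
|\langle \rho_0(a_t) v, w\rangle| \leq C\, \Xi(a_t)^{1/p}\, \|v\|_{\mathcal S}\, \|w\|_{\mathcal S}
$$
for all smooth vectors $v,w$, where $\Xi$ is the Harish-Chandra spherical function and $\|\cdot\|_{\mathcal S}$ is a fixed Sobolev norm (a sum of $L^2$-norms of finitely many derivatives along a basis of $\mathfrak g$); the Sobolev smoothing is needed precisely to pass from $K$-finite to general smooth vectors. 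Combined with the standard bound $\Xi(g) \leq C(1+d(o, g\cdot o))^{N} e^{-d(o,g\cdot o)/2}$ on a symmetric space, and with the fact that for the unipotent element $g = \exp(tU)$ the displacement $d(o, g\cdot o)$ grows like $\log t$ (since unipotent one-parameter subgroups move points at polynomial speed in the ambient metric, whence logarithmically in the hyperbolic/symmetric-space metric — this is where the commutation relation \eqref{eq:commutation} with the geodesic flow $\phi^X$ can be used to make the logarithmic growth precise), one gets $\Xi(\exp(tU))^{1/p} \leq B' t^{-\beta}$ for suitable $B',\beta>0$ and all $t\geq 1$.

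Putting these together with $v = f_0$, $w = g_0$ gives
$$
|\langle f\circ h_t, g\rangle - \mu(f)\mu(g)| = |\langle f_0\circ h_t, g_0\rangle| \leq B\, S(f)\, S(g)\, t^{-\beta},
$$
where $S(f) := \|f_0\|_{\mathcal S} \leq \|f\|_{\mathcal S}$ depends only on $L^2$-norms of finitely many derivatives of $f$, as claimed. The main obstacle is organizational rather than conceptual: one must be careful that the Sobolev order and the constants are uniform over $f,g$ (not depending on their $K$-types), which is exactly what the Cowling--Howe--Moore smoothing estimates provide, and one must correctly translate the "unipotent displacement grows logarithmically" fact into the decay rate $\beta$ via the $\mathfrak{sl}_2$-triple $(X,U,\cdot)$ from the Jacobson--Morozov theorem and the relation \eqref{eq:commutation}. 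Since all of this is available in the literature cited, I would present the proof as a citation-driven reduction rather than reproving the matrix-coefficient bounds from scratch.
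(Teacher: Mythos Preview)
The paper does not give its own proof of Theorem~\ref{thm:exp_mix}: it is stated as a known consequence of the cited literature on effective decay of matrix coefficients (Harish-Chandra, Borel--Wallach, Cowling, Howe, Moore, Katok--Spatzier, and in particular Bj{\"o}rklund--Einsiedler--Gorodnik), and is used as a black box thereafter. Your proposal is therefore not at odds with the paper's ``proof''---there is none to compare against---and your sketch is a faithful outline of the standard argument behind those references: reduce to $H_0$, use the spectral gap to place matrix coefficients of $\rho_0$ in some $L^p(G)$, bound smooth matrix coefficients pointwise by a power of the Harish-Chandra function $\Xi$, and then use the Cartan decomposition of $\exp(tU)$ (whose $A^+$-part grows logarithmically in $t$, as seen from the $\mathfrak{sl}_2$-triple) to convert this into a polynomial bound $t^{-\beta}$. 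In short, your plan is correct and is exactly the content the paper is citing rather than proving.
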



\subsection{Time-changes}

Let $\tau \colon M \to \R_{>0}$ be a strictly positive smooth function.
The time-change of $\{h_t\}_{t \in \R}$ generated by $\tau$ is the smooth flow $\{h^{\tau}_t\}_{t \in \R}$ induced by the (non-homogeneous) vector field $\tau^{-1}U$. The orbits of $\{h^{\tau}_t\}_{t \in \R}$ are the same as the ones of $\{h_t\}_{t \in \R}$, but they are traveled at different speed.
Explicitly, for any $x \in M$ and $t \in \R$, let $u(x,t)$ be defined by the equality
\begin{equation}\label{eq:defin_u}
t= \int_0^{u(x,t)} \tau \circ h_s(x) \diff s.
\end{equation}
Then, $u(x,t)$ is an additive cocycle over the flow $\{h_t\}_{t \in \R}$; in other words, for all $t,r \in \R$, 
$$
u(x,t+r) = u(x,t) + u(h_t(x),r),
$$
and we have
$$
h^\tau_t(x) = h_{u(x,t)}(x).
$$
If $\mathscr{L}$ denotes the Lie derivative, it is easy to check that 
$$
\mathscr{L}_{\tau^{-1}U} ( \tau \diff \mu) = \mathscr{L}_{U} (\diff \mu) = 0,
$$
which implies that $\{h^{\tau}_t\}_{t \in \R}$ preserves the smooth measure $\mu^\tau$ equivalent to $\mu$ with density $\tau$. Without loss of generality, we will assume that $\mu(\tau)=1$, so that $\mu^\tau$ is a probability measure.

We now formulate the definition of admissibility, which constitutes our assumption on the generator $\tau$ of the time-change. 
\begin{definition}\label{def:admiss}
We will say that the time-change $\{h^\tau_t\}_{t \in \R}$ is \emph{admissible} if the generator $\tau \in H^{\infty}$ is a smooth vector and $\tau, \tau^{-1}, X\tau,$ and $X^2\tau$ are uniformly bounded. We define
$$
m_\tau := \max \{ \|\tau\|_\infty, \|\tau^{-1}\|_\infty, \|X\tau\|_\infty, \|X^2\tau\|_\infty \} \geq 1.
$$
\end{definition}
If $M$ is a compact space, then $\tau$ is admissible if and only if it is a smooth function. In the non-compact case, the admissibility condition provides some control on the behaviour of the generator in the cusps. As we already mentioned, a condition of similar nature was introduced by Kushnirenko in his work on mixing for time-changes of horocycle flows \cite{Kus} and appears also in the result by Simonelli \cite{simonelli:spectrum}. 

We conclude this section with the following observation, which is an immediate consequence of \eqref{eq:defin_u} and Definition \ref{def:admiss}.
\begin{lemma}\label{thm:ulessm}
If $\tau$ is admissible, for all $x \in M$ and $t \geq 0$, we have $m_\tau^{-1}t \leq u(x,t) \leq m_\tau t$.
\end{lemma}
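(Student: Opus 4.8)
The plan is to read the bounds off directly from the defining relation \eqref{eq:defin_u}, using the two-sided control on $\tau$ provided by admissibility. First I would observe that, since $\tau$ is strictly positive, the map $r \mapsto \int_0^r \tau\circ h_s(x)\,\diff s$ is continuous, strictly increasing, vanishes at $r = 0$, and — because $\tau \geq m_\tau^{-1} > 0$ by Definition~\ref{def:admiss} — diverges to $+\infty$ as $r \to +\infty$; hence for every $t \geq 0$ there is a unique $u(x,t) \geq 0$ satisfying \eqref{eq:defin_u}.

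Then I would use that admissibility gives $\tau(y)^{-1} \leq \|\tau^{-1}\|_\infty \leq m_\tau$ and $\tau(y) \leq \|\tau\|_\infty \leq m_\tau$ for all $y \in M$, so that $m_\tau^{-1} \leq \tau\circ h_s(x) \leq m_\tau$ for all $s \in \R$. Integrating this over $s \in [0, u(x,t)]$ (an interval of non-negative length, so the inequalities are preserved) and invoking \eqref{eq:defin_u} yields
$$
m_\tau^{-1}\, u(x,t) \;\leq\; \int_0^{u(x,t)} \tau\circ h_s(x)\,\diff s \;=\; t \;\leq\; m_\tau\, u(x,t),
$$
and rearranging the two outer inequalities gives $m_\tau^{-1} t \leq u(x,t) \leq m_\tau t$, which is the claim.

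I do not expect any genuine obstacle here: the statement is an immediate consequence of \eqref{eq:defin_u} together with the uniform bounds on $\tau$ and $\tau^{-1}$ built into the definition of admissibility. The only mild point to address is the well-definedness and non-negativity of $u(x,t)$ for $t \geq 0$, which I would handle exactly as above using positivity of $\tau$ and the lower bound $\tau \geq m_\tau^{-1}$; the bounds on $X\tau$ and $X^2\tau$ play no role in this lemma and will only be needed later.
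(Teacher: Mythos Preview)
Your argument is correct and is exactly the immediate verification the paper has in mind: it states the lemma as an immediate consequence of \eqref{eq:defin_u} and Definition~\ref{def:admiss} without giving further details. Your additional remark on well-definedness and non-negativity of $u(x,t)$ is a harmless (and welcome) clarification.
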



\subsection{The main result}\label{sec:main_result}

We are now ready to state our main result. Roughly speaking, Theorem \ref{thm:main} says that whenever a unipotent flows has polynomial decay of correlations, then the same happens for any admissible time-change.

\begin{theorem}\label{thm:main}
Assume that $M$ satisfies the strong spectral gap assumption. Let $\{h^\tau_t\}_{t \in \R}$ be an admissible time-change of a unipotent flow $\{h_t\}_{t \in \R}$ on $M$. There exists $0 <\alpha < 1$ and a constant $C_\tau \geq 0$ such that for all $f,g \in \mathscr{C}^{\infty}_c(M)$, there exist ${\widetilde S}(f), {\widetilde S}(g) \geq 0$ (which depend on the uniform and $L^2$-norms of $f$ and $g$ and of finitely many of their derivatives) so that for all $t \geq 1$ we have
$$
\left\lvert \int_M f \circ h^{\tau} \cdot g \diff \mu^{\tau} - \mu^{\tau}(f) \mu^{\tau}(g) \right\rvert \leq B_\tau {\widetilde S}(f) {\widetilde S}(g) t^{-\alpha}.
$$
\end{theorem}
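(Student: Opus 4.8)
The plan is to follow the classical "mixing via shearing" scheme, replacing pointwise estimates on ergodic integrals with $L^2$-estimates throughout. Fix $f, g \in \mathscr{C}^\infty_c(M)$; the goal is to estimate the correlation $\int_M f \circ h^\tau_t \cdot g \diff\mu^\tau$. The key geometric input is the commutation relation \eqref{eq:commutation}: a short geodesic arc $\{\phi^X_r(x) : 0 \le r \le \eta\}$, when pushed forward by the unipotent flow for time $t$, gets sheared into a long (length $\sim e^{\eta} t \sim \eta t$ for small $\eta$) unipotent orbit segment, along which $g$ equidistributes by Theorem~\ref{thm:exp_mix}. The plan is therefore to (i) disintegrate $\mu^\tau$ (equivalently $\mu$, absorbing the density $\tau$) along a family of small geodesic arcs; (ii) show that on most such arcs the time-changed flow $h^\tau_t$ acts, up to a small error, like the homogeneous flow $h_{u}$ with $u = u(x,t) \asymp t$ by Lemma~\ref{thm:ulessm}, followed by a small geodesic displacement controlled by $X\tau, X^2\tau$; (iii) use the shearing to turn the resulting integral along each arc into a Birkhoff-type integral of $g$ along a long $h$-orbit, then apply the $L^2$ version of Theorem~\ref{thm:exp_mix} to conclude equidistribution; (iv) integrate over the arcs and over the base.

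The central computation is the shearing estimate. Writing $h^\tau_t(\phi^X_r(x))$ and comparing with $h^\tau_t(x)$, one differentiates the cocycle relation: $\frac{d}{dr} u(\phi^X_r(x), t)$ is governed by $\int$ of $X\tau$ along the orbit, which by admissibility is $O(m_\tau \cdot (\text{orbit length})) = O(m_\tau \eta t)$ — this is precisely the shear. So $h^\tau_t \phi^X_r(x) \approx \phi^X_{r} h_{u(x,t) + c(x) r t + O(\dots)}(x)$ for an explicit slope $c(x)$, and after a further application of \eqref{eq:commutation} the geodesic piece can be reabsorbed into a rescaling. The upshot is that $\int_0^\eta g(h^\tau_t \phi^X_r(x)) \diff r$ becomes, after change of variables $s = c(x) r t$, a $\frac{1}{c(x)t}\int_0^{\sim \eta c(x) t} g(h_s \phi^X_{\cdot}(x))\diff s$, to which Theorem~\ref{thm:exp_mix} applies with the long time $\sim \eta t$, yielding a $(\eta t)^{-\beta}$-type rate. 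Optimizing the free parameter $\eta$ against the various error terms (the ones coming from $X^2\tau$, from the non-constancy of $c(x)$ across the arc, and from points whose orbit leaves a large compact set) produces a power $t^{-\alpha}$ with $\alpha < \beta$; this optimization also forces $\alpha < 1$.

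**The main obstacle** is the non-compactness combined with the absence of renormalization for general unipotent flows. Unlike the horocycle case in \cite{forniulcigrai:timechanges}, there is no pointwise equidistribution with controlled deviations for $\phi^U$, so step (iii) must be done in $L^2$. This means one cannot say that $g$ equidistributes along *every* sheared arc; instead one shows, via Proposition~\ref{thm:bound_erg_int}, that the set of arcs on which the ergodic integral of $g$ is not well-approximated by $\eta \mu(g)$ has polynomially small measure, and on the bad set one uses a crude bound (here admissibility — in particular uniform control of $\tau^{-1}$ and the $X$-derivatives of $\tau$ — keeps the geodesic/time distortions from blowing up in the cusps, so that $\|g\|_\infty$ times the measure of the bad set is still $O(t^{-\alpha})$). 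Handling the excursions into the cusps cleanly, and making the disintegration of $\mu^\tau$ along geodesic arcs uniform enough that the density $\tau$ and the slope $c(x)$ vary controllably along each arc, is where the admissibility hypothesis does its real work; the rest is bookkeeping of error terms and a final optimization over $\eta$.
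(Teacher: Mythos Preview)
Your overall architecture---mixing via shearing along short geodesic arcs, good-set/bad-set decomposition via Proposition~\ref{thm:bound_erg_int}, crude bound on the bad set, and a final optimisation of the arc length---matches the paper. There is, however, a genuine gap in your shear estimate, and it is precisely the place where the argument would fail as written.

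You say that $\frac{d}{dr}u(\phi^X_r(x),t)$ is governed by $\int X\tau$ along the orbit, ``which by admissibility is $O(m_\tau\cdot(\text{orbit length}))$'', and call this the shear. But this is only an \emph{upper} bound, and the mixing-via-shearing mechanism needs a \emph{lower} bound on the shear rate so that the change of variables produces a genuinely long $h$-orbit. The relevant identity (Lemma~\ref{thm:change_variab} in the paper) is
\[
\frac{\partial}{\partial r}\bigl(e^{r}u(x_r,t)\bigr)=\frac{e^{r}\,v(r,x,t)}{\tau\circ h^\tau_t(x_r)},\qquad
v(r,x,t)=t-\int_0^{u(x_r,t)}X\tau\circ h_s(x_r)\diff s,
\]
and one needs $v\sim t$ to get a shear rate $\sim t/\tau\geq t/m_\tau>0$. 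Admissibility alone yields $|v-t|\le m_\tau^2 t$, which is useless. What is actually required is $|v-t|=O(t^{1-\gamma})$ on a set of large measure, and this is obtained by a \emph{second} application of Proposition~\ref{thm:bound_erg_int}, this time to $X\tau$ (which lies in $H_0^\infty\cap L^\infty(M)$ since $\mu(X\tau)=0$ and $\tau$ is admissible). This is exactly Lemma~\ref{thm:shear} in the paper, and it is where the ergodic-integral estimates do their essential work for the shear; your plan invokes Proposition~\ref{thm:bound_erg_int} only for the test function and never for $X\tau$, so the slope $c(x)$ you posit is not shown to be bounded away from zero.

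A minor slip: after the change of variables along the sheared arc, the resulting Birkhoff integral is of $\tau f$ (the function being transported by $h^\tau_t$), not of $g$; Proposition~\ref{thm:bound_erg_int} is applied to $\tau f$ at this step (Proposition~\ref{thm:bound_on_A} in the paper).
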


The proof actually provides an explicit bound for the exponent $\alpha$ above in terms of $\beta$ in Theorem \ref{thm:exp_mix}.
In particular, $\alpha$ can be taken independent of $\tau$, and is at least $\beta/8$. However, we do not claim that it is optimal, and we do not know whether the optimal exponent is actually independent of $\tau$.
Indeed, what are the optimal mixing rates is a question that is still open also in the case of time-changes of horocycle flows on compact surfaces. 
The best known bounds in that case, see \cite{forniulcigrai:timechanges}, coincide with the rate of equidistribution of sheared geodesic arcs. For the standard horocycle flow, it is known that the equidistribution of translates of geodesic segments is slower than the equidistribution of unstable horocycle arcs \cite{ravotti:horo}, and the latter matches the optimal mixing rates established by Ratner \cite{ratner2}. It is therefore possible that, in the case of smooth time-changes, a different approach based on shearing of curves transverse to the weak-stable leaves of the geodesic flows would provide sharper mixing estimates.


\section{Estimates on ergodic integrals}

Let $f \in H=L^2(M)$ and, for every $t \in \R$, define $I_tf$ to be the ergodic integral 
$$
I_tf(x) = \int_0^t f \circ h_r(x) \diff r.
$$
We are interested in the behaviour of $I_tf$ for large $t\geq 1$. When $M$ is a compact quotient of $\SL(2,\R)$ and $f$ is sufficiently smooth, the works of Burger \cite{bur}, Flaminio and Forni \cite{ffhoro}, and Bufetov and Forni \cite{bufo} provide sharp uniform bounds for $I_tf$. Already in the case where $M$ is non-compact, no uniform bound is possible: since the horocycle flow is not uniquely ergodic, the bounds on $I_tf(x)$ heavily depend on the starting point $x$, as it could lie on (or very close to) a periodic orbit, see \cite{ffhoro, Stro}. For unipotent flows which are not horospherical, no general pointwise estimate for ergodic integrals is known.

Estimates on deviations of ergodic averages are needed to carry out a \lq\lq mixing via shearing\rq\rq\ argument; however, it is not necessary to have control over the whole space. In this section, from the effective mixing result in Theorem \ref{thm:exp_mix}, we obtain $L^2$-bounds on $I_tf$, from which we deduce pointwise bounds on a set of large measure. The main estimate is the following.

\begin{proposition}\label{thm:bound_erg_int}
There exist $0<\gamma <1$ and ${\widetilde B} >0$ such that, for all $f \in H_0^\infty \cap L^\infty(M)$, the following holds. 
For every $T_0 \geq 1$, there exists a measurable set $E(f,T_0) \subset M$ with $\mu(E(f,T_0)) \leq T_0^{-\gamma}$ such that for all $t \geq T_0$, we have
$$
|I_tf(x)| \leq {\widetilde B} (S(f) + \|f\|_\infty) t^{1-\gamma} \text{\ \ \ for all\ } x \in M \setminus E(f,T_0).
$$ 
\end{proposition}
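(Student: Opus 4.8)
The plan is to deduce the pointwise bound from an $L^2$-bound on $I_tf$, which in turn follows from the effective mixing estimate in Theorem~\ref{thm:exp_mix}. First I would expand the square of the $L^2$-norm:
$$
\|I_tf\|_2^2 = \int_M \left( \int_0^t f \circ h_r(x) \diff r \right)^2 \diff \mu(x) = \int_0^t \int_0^t \langle f \circ h_{r}, f \circ h_{s} \rangle \diff r \diff s = \int_0^t \int_0^t \langle f \circ h_{r-s}, f \rangle \diff r \diff s,
$$
using that $\rho_0$ is unitary. Since $f \in H_0^\infty$, we have $\mu(f)=0$, so Theorem~\ref{thm:exp_mix} gives $|\langle f \circ h_\sigma, f \rangle| \leq B S(f)^2 |\sigma|^{-\beta}$ for $|\sigma| \geq 1$, while for $|\sigma| \leq 1$ we use the trivial bound $\|f\|_2^2 \leq \|f\|_\infty^2$ (or $S(f)^2$). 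Changing variables to $\sigma = r - s$ and integrating out one variable, one gets $\|I_tf\|_2^2 \leq C t \int_{-t}^{t} \min\{ \text{const}, |\sigma|^{-\beta}\} \diff \sigma$, which is $O(t^2)$ if $\beta < 1$, $O(t \log t)$ if $\beta = 1$, and $O(t)$ if $\beta > 1$. In all cases, for $\beta < 1$ this yields $\|I_tf\|_2 \leq C' (S(f) + \|f\|_\infty) t^{1 - \beta/2}$ (and something at least this good otherwise); set $\delta := \min\{\beta/2, 1/2\}$ so that $\|I_tf\|_2 \leq C'(S(f)+\|f\|_\infty) t^{1-\delta}$ for all $t \geq 1$.

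Next I would pass from the $L^2$-bound to a pointwise bound on a large set via a Chebyshev argument, but with care about uniformity in $t$. The naive approach of applying Chebyshev at a single scale $t$ gives a bad set depending on $t$; to get a set $E(f,T_0)$ that works simultaneously for \emph{all} $t \geq T_0$, I would restrict attention to a geometric sequence of scales $t_k = 2^k T_0$, $k \geq 0$. For each $k$, Chebyshev's inequality gives
$$
\mu\left( \{ x : |I_{t_k}f(x)| > \lambda_k \} \right) \leq \frac{\|I_{t_k}f\|_2^2}{\lambda_k^2} \leq \frac{(C')^2 (S(f)+\|f\|_\infty)^2 t_k^{2-2\delta}}{\lambda_k^2}.
$$
Choosing $\lambda_k = (C')(S(f)+\|f\|_\infty) t_k^{1-\delta/2}$ makes the right-hand side equal to $t_k^{-\delta} = (2^k T_0)^{-\delta}$. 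Let $E(f,T_0)$ be the union over $k \geq 0$ of these exceptional sets; then $\mu(E(f,T_0)) \leq \sum_{k\geq 0} (2^k T_0)^{-\delta} = T_0^{-\delta} (1 - 2^{-\delta})^{-1}$. Absorbing the constant (or shrinking $\delta$ slightly, or using $T_0 \geq 1$), this is $\leq T_0^{-\gamma}$ for a suitable $0 < \gamma < \delta$.

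Finally I would interpolate between consecutive dyadic scales using monotonicity-type control of $I_tf$ in $t$. For $x \notin E(f,T_0)$ and any $t \geq T_0$, pick $k$ with $t_k \leq t < t_{k+1} = 2t_k$. Then
$$
|I_tf(x)| \leq |I_{t_k}f(x)| + \left| \int_{t_k}^t f\circ h_r(x)\diff r \right| \leq \lambda_k + (t - t_k)\|f\|_\infty \leq \lambda_k + t_k \|f\|_\infty.
$$
Since $t_k \leq t$ and $\lambda_k = C'(S(f)+\|f\|_\infty) t_k^{1-\delta/2} \leq C'(S(f)+\|f\|_\infty) t^{1-\gamma}$ (after possibly shrinking $\gamma$ below $\delta/2$ so that $t_k^{1-\delta/2} \leq t^{1-\gamma}$, using $t_k \leq t$), and $t_k\|f\|_\infty \leq t \|f\|_\infty$, we conclude $|I_tf(x)| \leq \widetilde{B}(S(f)+\|f\|_\infty) t^{1-\gamma}$ for all $t \geq T_0$ and all $x \notin E(f,T_0)$, as desired. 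The case $t < 0$ (if needed) is symmetric via $I_tf(x) = -I_{-t}f(h_t(x))$, or can be incorporated by the same dyadic argument on the negative side.

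The main obstacle I anticipate is not any single computation but rather organizing the quantifiers correctly: the exceptional set $E(f,T_0)$ must be chosen \emph{once}, depending only on $f$ and $T_0$, yet control $I_tf$ for the whole range $t \geq T_0$. The dyadic-scales-plus-summable-tails device handles this cleanly, and the interpolation step only costs a controlled additive error because the integrand is bounded by $\|f\|_\infty$. A secondary point requiring a little attention is the borderline and large-$\beta$ behaviour of the correlation integral (the $\log$ factor when $\beta = 1$ and the saturation at $t$ when $\beta > 1$): in all these regimes the exponent $1-\delta$ with $\delta = \min\{\beta/2, 1/2\}$ remains valid, so no separate treatment is needed — one simply does not gain beyond $\delta = 1/2$ from this crude second-moment argument.
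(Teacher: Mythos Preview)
Your $L^2$-estimate and the Chebyshev/Borel--Cantelli strategy are exactly right, but the interpolation step has a genuine gap. With the dyadic sequence $t_k = 2^k T_0$, the gap between consecutive scales is $t_{k+1}-t_k = t_k$, which is of order $t$. Hence your bound
\[
|I_tf(x)| \leq \lambda_k + (t-t_k)\|f\|_\infty \leq \lambda_k + t_k\|f\|_\infty
\]
has a second term of size $\asymp t\|f\|_\infty$, not $t^{1-\gamma}\|f\|_\infty$. The sentence ``$t_k\|f\|_\infty \leq t\|f\|_\infty$, we conclude $|I_tf(x)| \leq \widetilde{B}(S(f)+\|f\|_\infty)t^{1-\gamma}$'' is therefore a non sequitur: you have only recovered the trivial bound $|I_tf(x)| \leq \|f\|_\infty\, t$. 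Note also that knowing $|I_{t_k}f(x)|$ and $|I_{t_{k+1}}f(x)|$ are both small does \emph{not} control $|I_tf(x)|$ for intermediate $t$, since the partial integral $r \mapsto I_rf(x)$ can oscillate; so there is no cheap salvage along those lines.

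The fix is to use a \emph{denser} sequence whose consecutive gaps are of order $t^{1-\gamma}$, not $t$. This is precisely what the paper does: it takes $k_n = n^{1/\gamma}$ with $\gamma = \beta_0/4$, so that $k_{n+1}-k_n = O(k_n^{1-\gamma})$ and the $\|f\|_\infty$-interpolation costs only $O(\|f\|_\infty\, t^{1-\gamma})$. The price is that the Chebyshev thresholds must be chosen so that $\mu(E_n)$ is summable in $n$ along this polynomial sequence; the paper sets the threshold at $\sqrt{B'}S(f)k_n^{1-\gamma}$, giving $\mu(E_n) \leq k_n^{-2\gamma} = n^{-2}$, which sums to $O(N_0^{-1}) \leq T_0^{-\gamma}$. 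If you want to keep a geometric flavour, you would instead need a maximal-function inequality for $\sup_{t_k \leq t \leq t_{k+1}} |I_tf|$, which is a different (and heavier) argument.
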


In order to obtain Proposition \ref{thm:bound_erg_int}, we first establish $L^2$-estimates, see, e.g., \cite[\S3]{ratneracta}.

\begin{lemma}\label{thm:L2estim}
There exist $0<\beta_0<1$ and $B'>0$ such that, for all $f \in H_0^\infty$ and all $t \geq 1$ we have
$$
\| I_tf\|_2^2 \leq B' S(f)^2 t^{2 - \beta_0}.
$$
\end{lemma}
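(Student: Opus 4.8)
The plan is to compute $\|I_tf\|_2^2$ by expanding the square and using Fubini, then recognize the integrand as a correlation of $f$ with its pushforward under the horocycle flow, to which Theorem~\ref{thm:exp_mix} applies. Concretely, write
$$
\|I_tf\|_2^2 = \int_M \left( \int_0^t f\circ h_r(x)\diff r\right)\left(\int_0^t f\circ h_s(x)\diff s\right)\diff\mu(x) = \int_0^t\int_0^t \langle f\circ h_r, f\circ h_s\rangle \diff r\diff s,
$$
and, using invariance of $\mu$ under $h_s$, the inner product equals $\langle f\circ h_{r-s}, f\rangle$, which by Theorem~\ref{thm:exp_mix} (applied with $f\in H_0^\infty$, so the $\mu(f)\mu(f)$ term vanishes) is bounded by $B\,S(f)^2 |r-s|^{-\beta}$ whenever $|r-s|\geq 1$. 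Thus
$$
\|I_tf\|_2^2 \leq \int\int_{|r-s|\leq 1} \|f\|_2^2 \diff r\diff s + B\,S(f)^2 \int_0^t\int_0^t |r-s|^{-\beta}\diff r\diff s.
$$

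The first term is $O(t)$ and the second, after the substitution $w=r-s$, is bounded by $B\,S(f)^2\, t\cdot\int_{-t}^{t}|w|^{-\beta}\diff w = O(S(f)^2\, t^{2-\beta})$ since $0<\beta<1$ makes $|w|^{-\beta}$ integrable near $0$. (One should be slightly careful: to use Theorem~\ref{thm:exp_mix}, which is stated for $t\geq 1$, one only invokes it on the region $|r-s|\geq 1$; on the complementary strip one bounds the correlation trivially by Cauchy--Schwarz, giving $\|f\circ h_{r-s}\|_2\|f\|_2 = \|f\|_2^2 \leq S(f)^2$ up to the implicit comparison between $\|f\|_2$ and $S(f)$.) Collecting terms and using $t\geq 1$, both contributions are absorbed into a bound of the form $B' S(f)^2 t^{2-\beta_0}$; one may simply take $\beta_0=\beta$, or any value in $(0,\min(1,\beta))$ if one wants strict inequalities, and choose $B'$ large enough to absorb the $O(t)$ term.

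I do not expect any serious obstacle here — this is the standard $L^2$ van der Corput / second-moment computation (as the reference to \cite[\S3]{ratneracta} suggests). The only minor points requiring care are the bookkeeping of the constant $S(f)$ (ensuring $\|f\|_2 \lesssim S(f)$, which holds since $S(f)$ dominates finitely many Sobolev norms including the $L^2$-norm) and the treatment of the near-diagonal strip $|r-s|<1$ where Theorem~\ref{thm:exp_mix} is not directly applicable; both are routine. The restriction $\beta_0<1$ in the statement is exactly what guarantees integrability of the singularity $|w|^{-\beta_0}$ at the origin, which is the only place the hypothesis $\beta<1$ (implicitly, or the choice $\beta_0<1$) is used.
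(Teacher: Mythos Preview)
Your approach is correct and follows the same overall strategy as the paper: expand $\|I_tf\|_2^2$ by Fubini, use $h_s$-invariance to reduce to correlations $\langle f\circ h_{r-s},f\rangle$, then split the square $[0,t]^2$ into a near-diagonal strip (trivial bound) and its complement (mixing bound from Theorem~\ref{thm:exp_mix}).

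The only notable difference is the choice of splitting threshold. You split at the fixed scale $|r-s|=1$ and integrate the decay $|r-s|^{-\beta}$ directly, obtaining $\beta_0=\beta$ (or any value in $(0,\min(1,\beta))$ to cover the possibility $\beta\geq 1$, which Theorem~\ref{thm:exp_mix} does not exclude). The paper instead splits at the $t$-dependent scale $|r-s|=t^{1/(1+\beta)}$ and bounds the off-diagonal contribution by $t^2\cdot\sup|\langle f\circ h_{r-s},f\rangle|$, yielding $\beta_0=\beta/(1+\beta)$. Your route is slightly sharper when $\beta<1$ and avoids the optimization of the threshold; the paper's route has the advantage of handling all $\beta>0$ uniformly without needing to mention the integrability issue near the diagonal. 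Both require $\|f\|_2\lesssim S(f)$, which you correctly flag. One small slip: in your displayed inequality the second integral should be restricted to $|r-s|\geq 1$ (as you yourself note immediately afterwards), otherwise it diverges when $\beta\geq 1$.
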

\begin{proof}
Let $t\geq 1$ be fixed. Applying the Fubini-Tonelli Theorem several times, we have
\begin{equation*}
\begin{split}
\| I_tf\|_2^2 &= \int_M \int_0^t \int_0^t (f \circ h_r)(x) \cdot (f \circ h_s) (x)\diff r \diff s \diff \mu \\
&= \int_{[0,t]^2} \int_M (f\circ h_{r-s})(x) \cdot f(x) \diff \mu \diff s \diff r,
\end{split}
\end{equation*}
where the last equality follows from measure-invariance. Let $\beta$ be as in Theorem \ref{thm:exp_mix}, and define $\Delta = \{(t_1,t_2) \in [0,t]^2 : |t_1-t_2|\leq t^{\frac{1}{1+\beta}} \}$. From the equation above, we get
\begin{equation*}
\begin{split}
\| I_tf\|_2^2 &\leq \|f\|_2^2 \Leb(\Delta) +  \int_{[0,t]^2 \setminus \Delta} \int_M (f\circ h_{r-s})(x) \cdot f(x) \diff \mu \diff s \diff r\\
&\leq 4\|f\|_2^2 t^{1+\frac{1}{1+\beta}} + t^2 \sup \left\{ \left\lvert  \langle f \circ h_{r-s}, f \rangle  \right\rvert : t^{\frac{1}{1+\beta}} < |r-s| \leq t\right\}.
\end{split}
\end{equation*}
By Theorem \ref{thm:exp_mix}, the claim follows by choosing $B' = 4  +B$ and $\beta_0=\beta/(1+\beta)$.
\end{proof}

We now prove Proposition \ref{thm:bound_erg_int}. Using the $L^2$-bounds in Lemma \ref{thm:L2estim} and Chebyshev's Inequality, for any given time $t$, it is easy to deduce a pointwise bound for $I_tf(x)$ for all $x$ in a set of large measure (which depends on $t$). In order to obtain pointwise estimates that apply to all times greater than a given $t_0$, we use a simple approximation argument.

\begin{proof}[{Proof of Proposition \ref{thm:bound_erg_int}}]
Let $\gamma = \frac{\beta_0}{4}$. Fix $T_0\geq 1$ and define $N_0= \lfloor T_0^{\gamma } \rfloor + 2$.
Consider the sequence $k_n = n^{\gamma^{-1}}$ and define
$$
E_n = \left\{ x \in M: \left\lvert I_{k_n}(x) \right\rvert \geq \sqrt{ B' }S(f) k_n^{1-\gamma }\right\}, \text{\ \ \ and\ \ \ } E(f, T_0) = \bigcup_{n \geq N_0} E_n.
$$
By Chebyshev's Inequality and Lemma \ref{thm:L2estim},
$$
\mu ( E(f, T_0)) \leq \sum_{n = N_0}^{\infty} \mu(E_n) \leq \sum_{n = N_0}^{\infty} (B' S(f)^2)^{-1} k_n^{2\gamma -2}  \|I_{k_n}f\|_2^2 \leq \sum_{n = N_0}^{\infty} n^{-2} \leq (N_0-1)^{-1} \leq T_0^{-\gamma }.
$$

Let now $t \geq T_0$, and consider $n=\lfloor t^{\gamma } \rfloor + 2 \geq N_0$; in particular $k_n = n^{\gamma^{-1}} > t$. Note moreover that there exists a constant $B_0$ such that
$$
|k_n - t| \leq (t^{\gamma } + 2)^{\gamma^{-1}} - t \leq B_0 t^{1-\gamma}.
$$
Thus, for all $x \in M$, we have
$$
\left\lvert I_{k_n}f(x) - I_tf(x)\right\rvert \leq \left\lvert \int_t^{k_n} f \circ h_s(x) \diff s\right\rvert \leq B_0 \|f\|_\infty t^{1-\gamma}.
$$
If $x \in M \setminus E(f, T_0)$, in particular $x \notin E_n$, then $|I_{k_n}f(x)|<\sqrt{ B'} S(f) k_n^{1-\gamma}$. It follows that
\begin{equation*}
\begin{split}
|I_tf(x)| &\leq |I_{k_n}f(x)| + B_0 \|f\|_\infty t^{1-\gamma} \leq \sqrt{ B'} S(f) (t+B_0 t^{1-\gamma})^{1-\gamma}+ B_0 \|f\|_\infty t^{1-\gamma} \\
&\leq {\widetilde B} (S(f) + \|f\|_\infty)t^{1-\gamma},
\end{split}
\end{equation*}
where ${\widetilde B} = \sqrt{ B'} (B_0+1)^{1-\gamma} + B_0$.
\end{proof}

\section{Shear and distortion of pushed geodesic segments}

For the sake of notation, for every point $x \in M$, we will denote $x_r = \phi^X_r(x)$, where $\{\phi^X_t\}_{t \in \R}$ is the \lq\lq geodesic flow\rq\rq\ given by the Jacobson-Morozov Theorem.
We are interested in the push-forward of short geodesic segments under the action of the time-change.
We notice that, by the commutation relation \eqref{eq:commutation}, we have
$$
h^\tau_t (x_r) = h_{u(x_r,t)} \circ \phi^X_r(x) = \phi^X_r \circ h_{e^ru(x_r,t)} (x).
$$
In this section, we provide some estimates on the first and second derivative of $e^ru(x_r,t)$, which control the shear and the distortion of the geodesic arc under the action of $h^\tau_t$, and are analogous to the results in \cite[\S3]{forniulcigrai:timechanges}. We will use this estimates in the next section.

\begin{lemma}\label{thm:change_variab}
Define
$$
v(r,x,t) = t- \int_0^{u(x_r,t)} X\tau \circ h_{s} (x_r) \diff s.
$$
We have
$$
\frac{\partial}{\partial r} \left(e^r u(x_r,t) \right) = \frac{e^r v(r,x,t)}{\tau \circ h^{\tau}_t (x_r)}. 
$$
\end{lemma}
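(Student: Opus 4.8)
The plan is to differentiate the defining relation for $u$ with respect to $r$ and solve for $\partial_r(e^r u(x_r,t))$. Recall from \eqref{eq:defin_u} that $u(x_r,t)$ is characterized implicitly by
\begin{equation*}
t = \int_0^{u(x_r,t)} \tau \circ h_s(x_r) \diff s.
\end{equation*}
The left-hand side does not depend on $r$, so differentiating both sides in $r$ will yield an identity relating $\partial_r u(x_r,t)$ to an integral term coming from the $r$-dependence of the base point $x_r = \phi^X_r(x)$ inside $\tau \circ h_s(x_r)$. To handle that integrand, I would use the commutation relation \eqref{eq:commutation} in the form $h_s \circ \phi^X_r = \phi^X_r \circ h_{e^{-r}s}$ (equivalently, reparametrize so that $\tfrac{\partial}{\partial r}(f \circ h_s \circ \phi^X_r)$ can be written via the vector field $X$); this converts the $r$-derivative of $\tau \circ h_s(x_r)$ into $(X\tau)\circ h_s(x_r)$ plus a term involving $s\,\partial_s$ of the same quantity, which after an integration by parts in $s$ produces precisely the combination appearing in the definition of $v(r,x,t)$.

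Concretely, the key steps in order are: (1) write $F(r,s) := \tau \circ h_s(\phi^X_r(x))$ and compute $\partial_r F$ using $[X,U]=-U$, obtaining $\partial_r F(r,s) = (X\tau)\circ h_s(x_r) - s\,\partial_s F(r,s)$; (2) differentiate the identity $t = \int_0^{u(x_r,t)} F(r,s)\diff s$ in $r$, getting
\begin{equation*}
0 = F(r,u(x_r,t))\,\partial_r u(x_r,t) + \int_0^{u(x_r,t)} \partial_r F(r,s)\diff s;
\end{equation*}
(3) substitute the expression for $\partial_r F$ and integrate the term $\int_0^{u} s\,\partial_s F\diff s$ by parts, which gives $u(x_r,t)\,F(r,u(x_r,t)) - \int_0^{u} F(r,s)\diff s = u(x_r,t)\,F(r,u(x_r,t)) - t$; (4) collect terms and solve for $\partial_r u(x_r,t)$, noting that $F(r,u(x_r,t)) = \tau\circ h_{u(x_r,t)}(x_r) = \tau \circ h^\tau_t(x_r)$ by the relation $h^\tau_t = h_{u(x,t)}$; (5) finally compute $\partial_r(e^r u(x_r,t)) = e^r u(x_r,t) + e^r \partial_r u(x_r,t)$ and simplify, checking that the $e^r u(x_r,t)$ term combines with the integral-by-parts boundary term to leave exactly $e^r\big(t - \int_0^{u(x_r,t)} X\tau\circ h_s(x_r)\diff s\big)/\tau\circ h^\tau_t(x_r) = e^r v(r,x,t)/\tau\circ h^\tau_t(x_r)$.

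I expect the main obstacle to be step (1): correctly tracking how the $r$-derivative interacts with the time parameter $s$ inside $h_s(\phi^X_r(x))$. The commutation relation \eqref{eq:commutation} reads $h_t \circ \phi^X_r = \phi^X_r \circ h_{e^r t}$, so one must be careful whether the relevant rescaling factor is $e^r$ or $e^{-r}$ and whether an extra $s$-weight appears; getting the sign and the weight right is what makes the integration by parts in step (3) telescope cleanly against the $e^r u(x_r,t)$ term in step (5). Everything else is a routine differentiation-under-the-integral and integration-by-parts computation, all justified by the smoothness and uniform bounds guaranteed by admissibility (Definition~\ref{def:admiss}) together with Lemma~\ref{thm:ulessm}, which bounds the domain of integration $u(x_r,t)$ linearly in $t$.
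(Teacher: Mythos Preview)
Your approach is essentially identical to the paper's: differentiate the defining identity for $u(x_r,t)$, compute $\partial_r F$ via the pushforward of $X$ under $h_s$, integrate by parts in $s$, and combine with the product-rule term $e^r u(x_r,t)$. The only slip is the sign in step~(1): from the commutation relation \eqref{eq:commutation} one gets $Dh_s(X)=X+sU$, hence
\[
\partial_r F(r,s) = (X\tau)\circ h_s(x_r) + s\,\partial_s F(r,s),
\]
not minus (equivalently, the paper writes $(X+sU)\tau = X\tau - \tau + \partial_s(s\tau)$). With the correct sign, the integration by parts in step~(3) contributes $+u(x_r,t)F(r,u(x_r,t)) - t$, and it is the term $+u(x_r,t)F(r,u(x_r,t))$ that cancels cleanly against $e^r u(x_r,t)$ in step~(5), leaving exactly $e^r v/(\tau\circ h^\tau_t(x_r))$. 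You correctly anticipated this as the delicate point.
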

\begin{proof}
Since
\begin{equation}\label{eq:1st_line}
\frac{\partial}{\partial r} \left(e^r u(x_r,t) \right) = e^r \left( u(x_r,t)+ \frac{\partial}{\partial r} u(x_r,t)\right),
\end{equation}
we focus on the derivative of $u(x_r,t)$ with respect to $r$. Differentiating the equality
$$
t= \int_0^{u(x_r,t)} \tau \circ h_{s} (x_r) \diff s,
$$
we get
$$
0 = \left( \frac{\partial}{\partial r} u(x_r,t)\right) \tau \circ h_{u(x_r,t)} (x_r) + \int_0^{u(x_r,t)} \frac{\partial}{\partial r} \left( \tau \circ h_{s} (x_r) \right) \diff s,
$$
and therefore
\begin{equation}\label{eq:2nd_line}
\frac{\partial}{\partial r} u(x_r,t) = -(\tau \circ h^{\tau}_t (x_r))^{-1} \left( \int_0^{u(x_r,t)} \frac{\partial}{\partial r} \left( \tau \circ h_{s} \circ \phi^X_r(x) \right) \diff s \right).
\end{equation}
It is easy to check that $Dh_s(X) = X+sU$. This gives us
\begin{equation*}
\begin{split}
\frac{\partial}{\partial r} \left( \tau \circ h_{s} \circ \phi^X_r(x) \right) &= (X + sU)\tau \circ h_{s} \circ \phi^X_r(x) \\
&= X\tau \circ h_{s} \circ \phi^X_r(x)- \tau \circ h_{s} \circ \phi^X_r(x) + \frac{\partial}{\partial s}\left( s \tau \circ h_{s} \circ \phi^X_r(x) \right). 
\end{split}
\end{equation*}
We substitute the expression above into \eqref{eq:2nd_line} and we conclude
\begin{equation}\label{eq:du}
\frac{\partial}{\partial r} u(x_r,t) = -(\tau \circ h^{\tau}_t (x_r))^{-1} \left( \int_0^{u(x_r,t)} X\tau \circ h_{s} (x_r) \diff s - t\right) - u(x_r,t).
\end{equation}
Combining \eqref{eq:du} and \eqref{eq:1st_line} completes the proof.
\end{proof}

We now show that the function $v(x,r,t)$ is of order $t$ on a set of large measure, and its derivative is no larger than $t$, which will imply that the distortion is of order $1/t$ on a set of large measure.

\begin{lemma}[Control on the shear]\label{thm:shear}
There exist $0<\gamma <1$ and a constant $C_v\geq 1$ such that for every $t_0 \geq 1$ there exists a measurable set $E_v(t_0) \subset M$ with $\mu(E_v(t_0) ) \leq C_v t_0^{-\gamma}$ such that for all $t\geq t_0$ we have
$$
|v(x,r,t)-t| \leq C_v (rt+t^{1-\gamma}) \text{\ \ \ for all\ } x \in M \setminus E_v(t_0) \text{\ and all\ } r \in [0,1].
$$
\end{lemma}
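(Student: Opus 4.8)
The plan is to rewrite $v(x,r,t)-t$ as an ergodic integral of $X\tau$ over an orbit of $\{h_s\}$ and then invoke the pointwise bound of Proposition~\ref{thm:bound_erg_int}. The first observation I would make is that $X\tau$ belongs to $H_0^\infty\cap L^\infty(M)$: admissibility gives $\tau\in H^\infty$, hence $X\tau\in H^\infty$ with $\|X\tau\|_\infty\le m_\tau$, and $\mu(X\tau)=0$ because $\mu$ is invariant under the geodesic flow $\{\phi^X_r\}$. By the definition of $v$,
\[
v(x,r,t)-t = -\int_0^{u(x_r,t)} X\tau\circ h_s(x_r)\diff s ,
\]
and I would change variables, using the commutation relation \eqref{eq:commutation} in the form $h_s\circ\phi^X_r = \phi^X_r\circ h_{e^r s}$ together with the substitution $\sigma = e^r s$, to move the integration onto the $\{h_\sigma\}$-orbit of the fixed point $x$:
\[
v(x,r,t)-t = -e^{-r}\int_0^{e^r u(x_r,t)} \bigl(X\tau\circ\phi^X_r\bigr)\circ h_\sigma(x)\diff\sigma .
\]

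The key step is to peel off the $r$-dependence as a harmless deterministic term. Since $\frac{\diff}{\diff\rho}\bigl(X\tau\circ\phi^X_\rho\bigr) = X^2\tau\circ\phi^X_\rho$ and $\|X^2\tau\|_\infty\le m_\tau$, one has $\|X\tau\circ\phi^X_r - X\tau\|_\infty \le r\,m_\tau$; writing $X\tau\circ\phi^X_r = X\tau + (X\tau\circ\phi^X_r - X\tau)$ and bounding $u(x_r,t)\le m_\tau t$ via Lemma~\ref{thm:ulessm}, the second summand contributes at most $m_\tau^2\,rt$ for every $x$, so that
\[
\bigl|v(x,r,t)-t\bigr| \le e^{-r}\bigl|I_{e^r u(x_r,t)}(X\tau)(x)\bigr| + m_\tau^2\,rt .
\]
The surviving integral term now involves the \emph{fixed} function $X\tau$ and the \emph{fixed} base point $x$. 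I would then take $\gamma\in(0,1)$ and $\widetilde B$ as in Proposition~\ref{thm:bound_erg_int}, set $T_0:=\max\{m_\tau^{-1}t_0,1\}$ and define $E_v(t_0):=E(X\tau,T_0)$, which has $\mu(E_v(t_0))\le T_0^{-\gamma}\le m_\tau^\gamma t_0^{-\gamma}$. For $t\ge t_0$ with $m_\tau^{-1}t\ge 1$ and any $r\in[0,1]$, Lemma~\ref{thm:ulessm} gives $e^r u(x_r,t)\ge u(x_r,t)\ge m_\tau^{-1}t\ge T_0$, so Proposition~\ref{thm:bound_erg_int} applied to $f=X\tau$ with time $e^r u(x_r,t)$ bounds $e^{-r}\bigl|I_{e^r u(x_r,t)}(X\tau)(x)\bigr|$ by $\widetilde B(S(X\tau)+\|X\tau\|_\infty)\,e^{-r\gamma}\,u(x_r,t)^{1-\gamma}\le \widetilde B(S(X\tau)+m_\tau)\,m_\tau^{1-\gamma}\,t^{1-\gamma}$ for every $x\notin E_v(t_0)$. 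Combined with the previous display this yields $|v(x,r,t)-t|\le C_v(rt+t^{1-\gamma})$ with $C_v$ depending only on $m_\tau$, $\|X\tau\|_\infty$ and $S(X\tau)$; the leftover range $t_0\le t<m_\tau$ is covered by the crude bound $|v(x,r,t)-t|\le\|X\tau\|_\infty u(x_r,t)\le m_\tau^2 t$, valid on all of $M$, after enlarging $C_v$ (which also absorbs the factor $m_\tau^\gamma$ in the measure estimate).

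The point I expect to require care is precisely the manoeuvre above. One should \emph{not} first apply Proposition~\ref{thm:bound_erg_int} after the change of variables and then try to take the union over $r\in[0,1]$ of the $\phi^X_{-r}$-translates of the exceptional set, since flowing a set of measure $\delta$ for unit time can inflate its measure by a factor going to infinity as $\delta\to 0$. Isolating the $O(rt)$ deterministic remainder first is exactly what lets a single exceptional set, independent of $r$, serve for all $r\in[0,1]$; a secondary but related point is that the Taylor-type control of $X\tau\circ\phi^X_r$ in $r$ is what forces the definition of admissibility to include a bound on $X^2\tau$. Everything else is bookkeeping.
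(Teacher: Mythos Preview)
Your proof is correct and follows essentially the same approach as the paper's: rewrite the integral via the commutation relation $h_s\circ\phi^X_r=\phi^X_r\circ h_{e^rs}$, peel off the $O(rt)$ term using $\|X\tau\circ\phi^X_r-X\tau\|_\infty\le m_\tau r$, and apply Proposition~\ref{thm:bound_erg_int} to $X\tau$ at the fixed base point $x$ so that a single exceptional set works for all $r\in[0,1]$. You are in fact slightly more careful than the paper on two points it glosses over, namely verifying $\mu(X\tau)=0$ (needed for $X\tau\in H_0^\infty$) and handling the constraint $T_0\ge 1$ in Proposition~\ref{thm:bound_erg_int} when $t_0<m_\tau$.
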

\begin{proof}
Using the commutation relations between $h_s$ and $\phi^X_r$ in \eqref{eq:commutation}, we have
$$
\left\lvert v(x,r,t)-t \right\rvert =  \left\lvert \int_0^{u(x_r,t)} X\tau \circ \phi^X_r \circ h_{e^rs}(x) \diff s  \right\rvert \leq \left\lvert \int_0^{u(x_r,t)} X\tau \circ h_{e^rs}(x) \diff s  \right\rvert + m_\tau^2 r t,
$$
where we used the fact that $\|X\tau \circ \phi^X_r - X\tau \|_\infty \leq \|X^2\tau\|_\infty r \leq m_\tau r$, together with Lemma \ref{thm:ulessm}. Hence it remains to bound the first summand:
$$
\left\lvert \int_0^{u(x_r,t)} X\tau \circ h_{e^rs}(x) \diff s  \right\rvert = \left\lvert \frac{1}{e^r} \int_0^{e^r u(x_r,t)} X\tau \circ h_{s}(x) \diff s  \right\rvert
$$

Let $\gamma$ and ${\widetilde B}$ be given by Proposition \ref{thm:bound_erg_int}.
Fix $t_0 \geq 1$, and let $E_v(t_0):= E(X\tau, m_\tau^{-1}t_0)$ be the set given by Proposition \ref{thm:bound_erg_int} with $T_0 =m_\tau^{-1} t_0$; in particular $\mu(E_v(t_0)) \leq m_\tau^\gamma t_0^{-\gamma}$. Since 
$$
|e^r u(x_r,t)| \geq m_\tau^{-1}t \geq m_\tau^{-1}t_0 = T_0, 
$$
for all $x \notin E_v(t_0)$, we have
$$
\left\lvert \frac{1}{e^r}\int_0^{e^r u(x_r,t)} X\tau \circ h_{s}(x) \diff s \right\rvert \leq {\widetilde B}(S(X\tau) + m_\tau)(e^r u(x_r,t))^{1-\gamma} \leq {\widetilde B}(em_\tau)^{1-\gamma}(S(X\tau) + m_\tau) t^{1-\gamma}.
$$
The proof is therefore complete.
\end{proof}

\begin{lemma}[Control on the distortion]\label{thm:distortion}
There exists a constant $C_\tau \geq 1$ such that 
$$
\left\lvert \frac{\partial}{\partial r}v(x,t,r)\right\rvert \leq C_\tau t,
$$ 
for all $t,r \in \R$.
\end{lemma}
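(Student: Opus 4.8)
The plan is to differentiate the explicit formula for $v(x,t,r)$ directly, reusing the computations from the proof of Lemma~\ref{thm:change_variab}. Recall that
$$
v(x,r,t) = t- \int_0^{u(x_r,t)} X\tau \circ h_{s} (x_r) \diff s,
$$
so by the product rule
$$
\frac{\partial}{\partial r} v(x,r,t) = -\left(\frac{\partial}{\partial r} u(x_r,t)\right) X\tau \circ h_{u(x_r,t)}(x_r) - \int_0^{u(x_r,t)} \frac{\partial}{\partial r}\bigl( X\tau \circ h_s (x_r)\bigr)\diff s.
$$
For the first summand I would bound $\lvert X\tau \circ h_{u(x_r,t)}(x_r)\rvert \leq m_\tau$, and use the formula \eqref{eq:du} for $\partial_r u(x_r,t)$ together with the admissibility bounds: $\tau^{-1}\leq m_\tau$, $\lvert X\tau\rvert \leq m_\tau$, and $u(x_r,t)\leq m_\tau t$ from Lemma~\ref{thm:ulessm}; this gives that $\lvert \partial_r u(x_r,t)\rvert$ is bounded by a constant multiple of $m_\tau^2 \, t$ (the three contributions being $m_\tau \cdot (m_\tau^2\, t + t)$ and $m_\tau t$ respectively), hence the first summand is $O(m_\tau^3\, t)$.

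For the integral term, I would apply exactly the same identity used in Lemma~\ref{thm:change_variab} but with $\tau$ replaced by $X\tau$, namely $Dh_s(X) = X + sU$, so that
$$
\frac{\partial}{\partial r}\bigl( X\tau \circ h_s \circ \phi^X_r(x)\bigr) = X^2\tau \circ h_s(x_r) - X\tau\circ h_s(x_r) + \frac{\partial}{\partial s}\bigl( s\, X\tau \circ h_s(x_r)\bigr).
$$
Integrating over $s \in [0, u(x_r,t)]$, the last term telescopes to $u(x_r,t)\, X\tau\circ h_{u(x_r,t)}(x_r)$, which is $O(m_\tau^2\, t)$; the first two terms are integrals of functions bounded by $m_\tau$ over an interval of length $\leq m_\tau t$, so each is $O(m_\tau^2\, t)$. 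Collecting everything, $\lvert \partial_r v(x,r,t)\rvert \leq C_\tau\, t$ for a constant $C_\tau\geq 1$ depending only on $m_\tau$.

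I do not expect a genuine obstacle here: unlike Lemma~\ref{thm:shear}, no ergodic-integral cancellation is needed, because we are only claiming an upper bound of order $t$ (not $t^{1-\gamma}$), and all the relevant quantities — $\tau^{\pm 1}$, $X\tau$, $X^2\tau$ — are uniformly bounded by admissibility, while the cocycle $u(x_r,t)$ is linearly bounded in $t$. The only mild care required is bookkeeping: making sure that the differentiation of $u(x_r,t)$ is justified (it is, since $\tau$ is smooth and strictly positive, so $u$ is smooth in all variables by the implicit function theorem applied to \eqref{eq:defin_u}) and that all constants are tracked so that the final $C_\tau$ depends only on $m_\tau$, consistently with its use elsewhere in the paper.
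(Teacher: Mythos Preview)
Your proposal is correct and follows essentially the same route as the paper: differentiate $v$ directly, handle the integral term via the identity $Dh_s(X)=X+sU$ applied to $X\tau$ (so that the integrand splits as $X^2\tau - X\tau + \partial_s(s\,X\tau)$), and bound $\partial_r u(x_r,t)$ using \eqref{eq:du} together with admissibility and Lemma~\ref{thm:ulessm}. The only discrepancy is in the precise powers of $m_\tau$ you quote (e.g.\ $|\partial_r u|$ is $O(m_\tau^3 t)$, not $O(m_\tau^2 t)$), but this is harmless since only dependence on $m_\tau$ matters.
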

\begin{proof}
From the definition ov $v$ in Lemma \ref{thm:change_variab}, we directly compute
\begin{equation}\label{eq:lemma_dist}
\frac{\partial}{\partial r}v(x,t,r) = \left( \frac{\partial}{\partial r} u(x_r,t) \right) X\tau \circ h^{\tau}_t(x_r) + \int_0^{u(x_r,t)} \frac{\partial}{\partial r} X\tau \circ h_{s} \circ \phi^X_r(x) \diff s,
\end{equation}
and we estimate the two summands separately.
As in the proof of Lemma \ref{thm:shear}, we have
\begin{equation*}
\begin{split}
& \int_0^{u(x_r,t)} \frac{\partial}{\partial r} \left( X\tau \circ h_{s} \circ \phi^X_r(x) \right) \diff s=  \int_0^{u(x_r,t)} (X + sU)X\tau \circ h_{s} \circ \phi^X_r(x) \diff s \\
&\quad =  \int_0^{u(x_r,t)}X^2\tau \circ h_{s} \circ \phi^X_r(x)  \diff s  -  \int_0^{u(x_r,t)}X\tau \circ h_{s} \circ \phi^X_r(x)\diff s +  \int_0^{u(x_r,t)} \frac{\partial}{\partial s}\left( s X\tau \circ h_{s} \circ \phi^X_r(x) \right) \diff s\\
&\quad = \int_0^{u(x_r,t)}X^2\tau \circ h_{s} \circ \phi^X_r(x)\diff s -  \int_0^{u(x_r,t)}X\tau \circ h_{s} \circ \phi^X_r(x) \diff s +  u(x_r,t) \left(  X\tau \circ h^{\tau}_{t} (x_r) \right).
\end{split}
\end{equation*}
Therefore, by Lemma \ref{thm:ulessm}, we can bound the second summand in \eqref{eq:lemma_dist} by 
\begin{equation}\label{eq:summ2}
\left\lvert \int_0^{u(x_r,t)} \frac{\partial}{\partial r} \left( X\tau \circ h_{s} \circ \phi^X_r(x) \right) \diff s\right\rvert \leq (\|X^2\tau\|_\infty + 2\|X\tau\|_\infty) | u(x_r,t) | \leq 3m_\tau^2 t.
\end{equation}
By \eqref{eq:du}, we have
\begin{equation}\label{eq:summ1}
\left\lvert \frac{\partial}{\partial r} u(x_r,t)\right\rvert \leq m_\tau( \|X\tau\|_\infty m_\tau t + t) + m_\tau t \leq 3m_\tau^3 t.
\end{equation}
From \eqref{eq:summ2}, \eqref{eq:summ1}, and the fact that $\|X\tau\|_\infty \leq m_\tau$ by assumption, the claim follows.
\end{proof}

\section{Proof of the main result}

\subsection{Mixing via shearing}

For any $f \in L_0^2(M, \mu^{\tau})$, $t \in \R$ and $s >0$, define
$$
A_{t,s}f(x) = \int_0^s f \circ h^\tau_t \circ \phi^X_r(x) \diff r.
$$

\begin{lemma}\label{thm:mix_via_s}
Let $f \in L_0^2(M, \mu^{\tau})$ be bounded, and let $g \in L^2(M, \mu^{\tau})$ be such that $Xg \in L^2(M, \mu^{\tau})$. For every $t \in \R$ and for all $\sigma >0$, we have
$$
\left\lvert \int_M f \circ h^{\tau} \cdot g \diff \mu^{\tau} \right\rvert \leq \left( \|\tau g \|_2 +\sigma \|X(\tau g) \|_2 \right)\frac{ 1 }{\sigma}  \sup_{s \in [0,\sigma]} \|A_{t,s} f\|_2.
$$
\end{lemma}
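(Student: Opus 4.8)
The plan is to pass from $\mu^\tau$ to the Haar measure $\mu$, exploit the $\mu$-invariance of the geodesic flow $\{\phi^X_r\}$ to average the correlation over a short geodesic segment, and then trade the resulting shear integrals $A_{t,s}f$ against a first-order Taylor expansion of $\tau g$ in the $X$-direction.

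\textbf{Reduction to Haar measure and averaging.} Since $\diff\mu^\tau=\tau\diff\mu$, we have $\int_M f\circ h^\tau_t\cdot g\diff\mu^\tau=\int_M f\circ h^\tau_t\cdot(\tau g)\diff\mu$. The right action of $G$ on $M$ preserves $\mu$, so in particular $\phi^X_r$ is $\mu$-preserving for every $r$. Composing the integrand with $\phi^X_r$, integrating, and averaging over $r\in[0,\sigma]$ yields
$$
\int_M f\circ h^\tau_t\cdot(\tau g)\diff\mu=\frac{1}{\sigma}\int_0^\sigma\int_M \bigl(f\circ h^\tau_t\circ\phi^X_r\bigr)\cdot\bigl((\tau g)\circ\phi^X_r\bigr)\diff\mu\diff r.
$$

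\textbf{Taylor expansion along $X$.} Writing $(\tau g)\circ\phi^X_r=\tau g+\int_0^r X(\tau g)\circ\phi^X_\rho\diff\rho$ and substituting, the quantity above splits as $\mathrm{I}+\mathrm{II}$. By Fubini, $\mathrm{I}=\frac{1}{\sigma}\int_M A_{t,\sigma}f\cdot\tau g\diff\mu$, which is bounded by $\sigma^{-1}\|\tau g\|_2\sup_{s\in[0,\sigma]}\|A_{t,s}f\|_2$ by Cauchy--Schwarz. For $\mathrm{II}=\frac{1}{\sigma}\int_0^\sigma\int_0^r\int_M(f\circ h^\tau_t\circ\phi^X_r)\cdot(X(\tau g)\circ\phi^X_\rho)\diff\mu\diff\rho\diff r$, I would exchange the $r$- and $\rho$-integrations over $\{0\le\rho\le r\le\sigma\}$, use the flow property $\phi^X_{\rho+r'}=\phi^X_{r'}\circ\phi^X_\rho$ to recognize $\int_\rho^\sigma f\circ h^\tau_t\circ\phi^X_r\diff r=(A_{t,\sigma-\rho}f)\circ\phi^X_\rho$, and apply the $\mu$-invariance of $\phi^X_\rho$ once more, obtaining $\mathrm{II}=\frac{1}{\sigma}\int_0^\sigma\int_M A_{t,\sigma-\rho}f\cdot X(\tau g)\diff\mu\diff\rho$. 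Since $\sigma-\rho\in[0,\sigma]$, Cauchy--Schwarz bounds this by $\|X(\tau g)\|_2\sup_{s\in[0,\sigma]}\|A_{t,s}f\|_2$. Adding the two estimates and factoring out $\sigma^{-1}$ gives exactly the asserted inequality.

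\textbf{Integrability bookkeeping and the main obstacle.} One checks that $\tau g\in L^2(M,\mu)$ and $X(\tau g)=(X\tau)g+\tau\,Xg\in L^2(M,\mu)$, using that $\tau$ and $X\tau$ are bounded (admissibility) together with $g,Xg\in L^2(M,\mu^\tau)$ and the equivalence of $\mu$ and $\mu^\tau$; boundedness of $f$ ensures $A_{t,s}f\in L^\infty\subset L^2$. There is no deep difficulty in this lemma: the only points requiring care are justifying the repeated applications of Fubini--Tonelli and of the fundamental theorem of calculus from these integrability facts, and keeping straight the order in which $\phi^X_\rho$, $\phi^X_{r'}$, and $h^\tau_t$ are composed when rewriting the shear integrals.
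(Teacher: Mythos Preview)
Your proof is correct and follows essentially the same approach as the paper: reduce to the Haar measure, average the correlation over a geodesic segment of length $\sigma$ using the $\mu$-invariance of $\phi^X_r$, and then split into a boundary term and an $X(\tau g)$-term via one integration. The only cosmetic difference is that the paper obtains the two terms by a direct integration by parts in $s$ (viewing $A_{t,s}f$ as the antiderivative of $f\circ h^\tau_t\circ\phi^X_s$), whereas you first expand $(\tau g)\circ\phi^X_r$ by the fundamental theorem of calculus, swap the $(r,\rho)$-integrals, and use one extra application of $\mu$-invariance; the resulting bounds are identical.
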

\begin{proof}
By definition of $\mu^\tau$ and the invariance properties of $\mu$, we rewrite
$$
\int_M f \circ h^{\tau} \cdot g \diff \mu^{\tau} = \int_M f \circ h^{\tau} \cdot (\tau g) \diff \mu = \langle f \circ h^{\tau} , (\tau g)\rangle = \langle f \circ h^{\tau} \circ \phi^X_s, (\tau g) \circ \phi^X_s\rangle,
$$
for all $s \in \R$. Therefore, given $\sigma >0$, 
$$
\int_M f \circ h^{\tau} \cdot g \diff \mu^{\tau} = \frac{1}{\sigma} \int_0^\sigma \langle f \circ h^{\tau} \circ \phi^X_s, (\tau g) \circ \phi^X_s\rangle \diff s.
$$
After integrating by parts, we obtain
$$
\int_M f \circ h^{\tau} \cdot g \diff \mu^{\tau} = \frac{1}{\sigma} \langle \int_0^\sigma f \circ h^{\tau} \circ \phi^X_s \diff s, (\tau g) \circ \phi^X_\sigma \rangle -  \frac{1}{\sigma}\int_0^\sigma \langle \int_0^s f \circ h^{\tau} \circ \phi^X_r \diff r, X(\tau g) \circ \phi^X_s \rangle \diff s,
$$
and, by Cauchy-Schwarz inequality,
$$
\left\lvert \int_M f \circ h^{\tau} \cdot g \diff \mu^{\tau} \right\rvert \leq \frac{ \|\tau g\|_2 }{\sigma} \|A_{t,\sigma}f \|_2+ \|X(\tau g)\|_2 \sup_{s \in [0,\sigma]} \|A_{t,s}f\|_2.
$$
\end{proof}


\subsection{Proof of Theorem \ref{thm:main}}

We now prove Theorem \ref{thm:main}. First of all, notice that $f \in L^2(M, \mu^{\tau})$ if and only if $f \in H=L^2(M, \mu)$, and $f \in L_0^2(M, \mu^{\tau})$ implies that $\tau f \in L_0^2(M, \mu)$.

Let $f,g \in \mathscr{C}^{\infty}_c(M)$; in particular $f, g \in H^\infty $ and $f, Xf \in L^\infty(M)$. Let $t \geq 1$ be fixed, and define
$$
t_0 = \sqrt{t}, \text{\ \ \ } \sigma = t^{-\gamma/2}, \text{\ \ \ and\ \ \ } E(t) = E_v(t_0) \cup h^{\tau}_{-t}(E(\tau f,t_0)),
$$
where $E_v(t_0)$, and $E(\tau f,t_0)$ are the sets given by Lemma \ref{thm:shear} and Proposition \ref{thm:bound_erg_int} respectively. 
By definition of $\mu^\tau$, we have
$$
\mu(h^{\tau}_{-t}(E(\tau f,t_0))) \leq m_\tau \mu^{\tau}(h^{\tau}_{-t}(E(\tau f,t_0))) = m_\tau \mu^{\tau}(E(\tau f,t_0)) \leq m_\tau^2\mu (E(\tau f,t_0)) \leq m_\tau^2 t^{-\gamma/2},
$$
in particular, we can bound
\begin{equation}\label{eq:L2norm_A}
\begin{split}
\|A_{t,s}f\|_2 &\leq \sup_{x \in M\setminus E(t)} |A_{t,s}f(x)| + \|f\|_{\infty} \sigma \mu(E(t)) \\
& \leq \sup_{x \in M\setminus E(t)} |A_{t,s}f(x)| +  (C_v+m_\tau^2)\|f\|_{\infty} \sigma t^{-\gamma/2}.
\end{split}
\end{equation}
We now estimate $|A_{t,s}f(x)|$ for $x \in M\setminus E(t)$.
Notice that for all such points, by Lemma \ref{thm:shear}, we have
$$
\left\lvert v(x,r, t) - t \right\rvert \leq C_v t^{1-\gamma/2}, \text{\ \ \ for all\ }r \in [0,\sigma].
$$
Up to increasing the constant at the end of the proof, we will assume that $t \geq (2C_v)^{2/\gamma}$, so that 
\begin{equation}\label{eq:v_geq_t}
\left\lvert v(x,r, t) \right\rvert \geq \frac{t}{2}, \text{\ \ \ for all\ }r \in [0,\sigma].
\end{equation}

\begin{lemma}\label{thm:d_1_on_v}
There exists a constant $C_\tau' \geq 1$ such that for all $x \in M \setminus E(t)$ and all $s \in [0, \sigma]$, we have
$$
\left\lvert \frac{\partial}{\partial r} \frac{1}{e^r v(x,r,t)}\right\rvert \leq \frac{C_\tau'}{t}.
$$
\end{lemma}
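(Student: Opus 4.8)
The plan is to differentiate directly via the quotient rule and then estimate the outcome using the shear and distortion bounds already in hand. Set $w(r) = e^r v(x,r,t)$. Since $w'(r) = e^r\bigl(v(x,r,t) + \frac{\partial}{\partial r}v(x,r,t)\bigr)$, we get
$$
\frac{\partial}{\partial r}\frac{1}{e^r v(x,r,t)} = -\frac{w'(r)}{w(r)^2} = -\frac{v(x,r,t) + \frac{\partial}{\partial r}v(x,r,t)}{e^r\, v(x,r,t)^2},
$$
so that
$$
\left\lvert \frac{\partial}{\partial r}\frac{1}{e^r v(x,r,t)} \right\rvert = \frac{\bigl\lvert v(x,r,t) + \frac{\partial}{\partial r}v(x,r,t)\bigr\rvert}{e^r\, v(x,r,t)^2}.
$$

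Next I would collect the three needed ingredients, all valid for $x \in M \setminus E(t)$ and $r \in [0,\sigma]$. First, $e^r \geq 1$ since $r \geq 0$, so the exponential in the denominator only helps. Second, the lower bound \eqref{eq:v_geq_t} gives $\lvert v(x,r,t)\rvert \geq t/2$, hence $v(x,r,t)^2 \geq t^2/4$; this is the crucial point, as it keeps the denominator from being too small. Third, for the numerator: Lemma \ref{thm:shear} together with the standing assumption $t \geq (2C_v)^{2/\gamma}$ gives $\lvert v(x,r,t)\rvert \leq t + C_v t^{1-\gamma/2} \leq \tfrac{3}{2}t$, while Lemma \ref{thm:distortion} gives $\bigl\lvert \frac{\partial}{\partial r}v(x,r,t)\bigr\rvert \leq C_\tau t$ (it holds for all $r$, in particular on $[0,\sigma]$).

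Combining these bounds yields, for $x \in M \setminus E(t)$ and $s \in [0,\sigma]$,
$$
\left\lvert \frac{\partial}{\partial r}\frac{1}{e^r v(x,r,t)} \right\rvert \leq \frac{\tfrac{3}{2}t + C_\tau t}{t^2/4} = \frac{4\bigl(\tfrac{3}{2} + C_\tau\bigr)}{t},
$$
so the lemma holds with $C_\tau' = 4\bigl(\tfrac{3}{2} + C_\tau\bigr) \geq 1$. I do not expect a real obstacle: the proof is a routine computation, and the only step requiring any care is checking that the lower bound $\lvert v \rvert \geq t/2$ is available on the entire parameter range $r \in [0,\sigma]$ for points outside the exceptional set, which is exactly what \eqref{eq:v_geq_t} (derived from Lemma \ref{thm:shear}) provides.
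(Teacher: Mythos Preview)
Your proof is correct and follows essentially the same approach as the paper: differentiate $1/(e^r v)$ directly and bound using Lemma~\ref{thm:distortion} together with the lower bound \eqref{eq:v_geq_t}. The only cosmetic difference is that the paper splits the derivative as $\bigl|\tfrac{1}{e^r v}\bigr| + \bigl|\tfrac{\partial_r v}{e^{2r} v^2}\bigr|$ and thus avoids needing an upper bound on $|v|$, whereas you bound numerator and denominator separately (invoking Lemma~\ref{thm:shear} for the upper bound on $|v|$); both lead to the same constant $4(1+C_\tau)$ up to a harmless factor.
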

\begin{proof}
The claim follows immediately from Lemma \ref{thm:distortion} and \eqref{eq:v_geq_t}, since
$$
\left\lvert \frac{\partial}{\partial r} \frac{1}{e^r v(x,r,t)}\right\rvert \leq \left\lvert \frac{1}{e^r v(x,r,t)}\right\rvert + \left\lvert \frac{\frac{\partial}{\partial r}v(x,r,t)}{e^{2r} v^2(x,r,t)}\right\rvert \leq \frac{4(1+C_\tau)}{t}.
$$
\end{proof}

\begin{proposition}\label{thm:bound_on_A}
There exists a constant $B_\tau' \geq 1$ depending on $\tau$ only such that for all $x \in M \setminus E(t)$ and all $s \in [0, \sigma]$, we have
$$
|A_{t,s}f(x)| \leq B_\tau' ({\widehat S}(f) + \|f\|_\infty+ \|Xf\|_\infty) t^{-\gamma},
$$
where ${\widehat S}(f)$ is a Sobolev norm of $f$. 
\end{proposition}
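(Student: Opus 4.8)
The plan is to realise $A_{t,s}f(x)$ as a rescaled ergodic integral of $\tau f$ along the $h$-orbit of the point $y:=h^{\tau}_t(x)$, with explicit error terms, and then invoke Proposition~\ref{thm:bound_erg_int}. By the commutation relation~\eqref{eq:commutation} one has $h^{\tau}_t(x_r)=\phi^X_r(h_{w(r)}(x))$ with $w(r):=e^r u(x_r,t)$; since $x_0=x$ we get $w(0)=u(x,t)$ and $h_{w(0)}(x)=h^{\tau}_t(x)=y$, so setting $p(r):=w(r)-w(0)\ge 0$ we may write $h_{w(r)}(x)=h_{p(r)}(y)$. First I would discard the geodesic displacement: since $r\le s\le\sigma=t^{-\gamma/2}$ and $|f(\phi^X_r(z))-f(z)|\le r\|Xf\|_\infty$,
$$
\Bigl|A_{t,s}f(x)-\int_0^s f\bigl(h_{p(r)}(y)\bigr)\diff r\Bigr|\le\tfrac12\|Xf\|_\infty\sigma^2=\tfrac12\|Xf\|_\infty t^{-\gamma}.
$$
Then I would change variables $q=p(r)$; this is legitimate because $p'(r)=w'(r)=e^r v(x,r,t)/\tau(h^{\tau}_t(x_r))$ (Lemma~\ref{thm:change_variab}) is strictly positive by~\eqref{eq:v_geq_t}, valid since $x\notin E_v(t_0)$, and is bounded by $2e\,m_\tau t$ for $t$ large by Lemma~\ref{thm:shear}, so that $0\le p(s)\le 2e\,m_\tau t^{1-\gamma/2}$. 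Hence $\int_0^s f(h_{p(r)}(y))\diff r=\int_0^{p(s)} f(h_q(y))\,\diff q/w'(r(q))$.

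The heart of the argument is that the Jacobian weight equals $\tau(h_q(y))/t$ up to a controlled error. Since $1/w'(r)=\tau(h^{\tau}_t(x_r))/(e^r v(x,r,t))$ and, on $M\setminus E(t)$, Lemma~\ref{thm:shear} gives $|v(x,r,t)-t|\le C_v t^{1-\gamma/2}$ for $r\in[0,\sigma]$, while $|e^r-1|=O(\sigma)$, $|\tau(h^{\tau}_t(x_r))-\tau(h_q(y))|\le m_\tau\sigma$ (because $h^{\tau}_t(x_r)=\phi^X_r(h_q(y))$), and $\tau\ge m_\tau^{-1}$, one obtains the pointwise bound $|1/w'(r(q))-\tau(h_q(y))/t|\le C_\tau'' t^{-1-\gamma/2}$ with $C_\tau''$ depending only on $\tau$. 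Multiplying by $f(h_q(y))$, integrating over $q\in[0,p(s)]$ and using $p(s)\le 2e\,m_\tau t^{1-\gamma/2}$, this error contributes $O_\tau(\|f\|_\infty t^{-\gamma})$, and the main term is $\tfrac1t\int_0^{p(s)}(\tau f)(h_q(y))\diff q=\tfrac1t I_{p(s)}(\tau f)(y)$.

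It remains to estimate $\tfrac1t I_{p(s)}(\tau f)(y)$. Since $f\in\mathscr{C}^{\infty}_c(M)$ with $\mu^{\tau}(f)=0$ and $\tau$ is admissible, $\tau f\in H_0^{\infty}\cap L^{\infty}(M)$, $\|\tau f\|_\infty\le m_\tau\|f\|_\infty$, and $S(\tau f)\le C^{(1)}_\tau{\widehat S}(f)$ for a suitable Sobolev norm ${\widehat S}(f)$ of $f$ (Leibniz, using that derivatives of $\tau$ are controlled on $\mathrm{supp}\,f$). Because $x\notin h^{\tau}_{-t}(E(\tau f,t_0))$ we have $y=h^{\tau}_t(x)\notin E(\tau f,t_0)$; so when $p(s)\ge t_0=\sqrt t$, Proposition~\ref{thm:bound_erg_int} applies and, using $p(s)\le 2e\,m_\tau t^{1-\gamma/2}$ together with $(1-\tfrac\gamma2)(1-\gamma)\le 1-\gamma$, gives $\tfrac1t|I_{p(s)}(\tau f)(y)|\le {\widetilde B}(2e\,m_\tau)^{1-\gamma}(S(\tau f)+\|\tau f\|_\infty)t^{-\gamma}$; when $p(s)<\sqrt t$ one has trivially $\tfrac1t|I_{p(s)}(\tau f)(y)|\le\|\tau f\|_\infty t^{-1/2}\le m_\tau\|f\|_\infty t^{-\gamma}$, since $\gamma=\beta_0/4<\tfrac12$. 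Summing the three error contributions and absorbing the bounded range of $t$ into the final constant $B_\tau'$ yields the claim.

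The step I expect to be the main obstacle is the change of variables: one must verify that, on the large set $M\setminus E(t)$, the Jacobian of $r\mapsto e^r u(x_r,t)$ is comparable to $t/\tau$ with relative error $O(t^{-\gamma/2})$ — which is exactly where Lemma~\ref{thm:shear} and the calibrated choice $\sigma=t^{-\gamma/2}$ are used — and, crucially, that the $\tau$-weight this Jacobian produces is precisely what converts the ergodic integral of $f$ into that of $\tau f$, whose vanishing $\mu$-mean is what allows Proposition~\ref{thm:bound_erg_int} to be applied at the good basepoint $y=h^{\tau}_t(x)$. One could alternatively integrate by parts in $r$ and use Lemma~\ref{thm:d_1_on_v} to control the variation of the weight, but the direct substitution seems more transparent.
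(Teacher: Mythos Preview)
Your argument is correct and reaches the same endpoint as the paper --- the ergodic integral $I_{p(r)}(\tau f)$ at the base point $y=h^{\tau}_t(x)$, bounded via Proposition~\ref{thm:bound_erg_int} after the dichotomy $p(r)\lessgtr\sqrt{t}$ --- but the route is genuinely different. The paper first multiplies and divides by $\tau$, then removes the $\phi^X_r$-displacement from $\tau f$ (picking up $\|X(\tau f)\|_\infty$ rather than your $\|Xf\|_\infty$), and then \emph{integrates by parts} in $r$, using Lemma~\ref{thm:change_variab} to recognise $\frac{\partial}{\partial r}(e^r u(x_r,t))$ and Lemma~\ref{thm:d_1_on_v} to control the derivative of the weight $1/(e^r v)$. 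You instead perform the substitution $q=p(r)$ directly and show the pointwise estimate $|1/w'(r)-\tau(h_q(y))/t|=O_\tau(t^{-1-\gamma/2})$ from Lemma~\ref{thm:shear} alone. The advantage of your approach is that it never invokes Lemma~\ref{thm:d_1_on_v} (and hence not Lemma~\ref{thm:distortion} either): only the zeroth-order shear estimate on $v$ is needed, not the bound on $\partial_r v$. The paper's integration-by-parts route is perhaps more in keeping with the ``mixing via shearing'' template and makes the role of the distortion bound explicit, but your substitution is, as you say, more transparent and in fact slightly more economical in its hypotheses. Amusingly, the alternative you mention in your last sentence is exactly what the paper does.
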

\begin{proof}
By the commutation relations between $h_t$ and $\phi^X_r$, it follows that
\begin{equation*}
\begin{split}
A_{t,s}f(x) &= \int_0^s \frac{\tau f}{\tau} \circ h^{\tau}_t \circ \phi^X_r(x) \diff r =  \int_0^s \frac{(\tau f)\circ \phi^X_r \circ h_{e^ru(x_r,t)}(x) }{\tau \circ h^{\tau}_t (x_r)}\diff r \\
&= \int_0^s \frac{(\tau f) \circ h_{e^ru(x_r,t)}(x) }{\tau \circ h^{\tau}_t (x_r)}\diff r + \int_0^s \frac{[(\tau f)\circ \phi^X_r - (\tau f)] \circ h_{e^ru(x_r,t)}(x) }{\tau \circ h^{\tau}_t (x_r)}\diff r.
\end{split}
\end{equation*}
Since $\|(\tau f) \circ \phi^X_r - (\tau f)\|_\infty \leq \sigma \|X(\tau f)\|_\infty$, it follows that
$$
|A_{t,s}f(x)| \leq \left\lvert \int_0^s \frac{(\tau f) \circ h_{e^ru(x_r,t)}(x) }{\tau \circ h^{\tau}_t \circ \phi^X_r(x)}\diff r \right\rvert + m_\tau \sigma^2 \|X(\tau f)\|_{\infty}.
$$
By the choice of $\sigma = t^{-\gamma/2}$, the second summand satisfies the desired bound. It remains to estimate the first term in the right hand-side above.
We multiply and divide by $e^r v(x,r,t)$ so that, using Lemma \ref{thm:change_variab}, by an integration by parts and a change of variable, we obtain
\begin{equation*}
\begin{split}
\int_0^s \frac{(\tau f) \circ h_{e^ru(x_r,t)}(x) }{\tau \circ h^{\tau}_t \circ \phi^X_r(x)}\diff r =& \int_0^s (\tau f) \circ h_{e^ru(x_r,t)}(x) \frac{\partial}{\partial r} \left(e^r u(x_r,t) \right) \frac{1}{e^rv(x,r,t)} \diff r \\
=& \frac{1}{e^sv(x,s,t)} \left(\int_{u(x,t)}^{e^su(x_s,t)} (\tau f) \circ h_r(x) \diff r\right) \\
& - \int_0^s \left( \frac{\partial}{\partial r} \frac{1}{e^rv(x,r,t)} \right) \left( \int_{u(x,t)}^{e^ru(x_r,t)} (\tau f) \circ h_{\ell}(x) \diff \ell \right) \diff r.
\end{split}
\end{equation*}
Therefore, by Lemma \ref{thm:d_1_on_v} and \eqref{eq:v_geq_t},
\begin{equation*}
\begin{split}
\left\lvert \int_0^s \frac{(\tau f) \circ h_{e^ru(x_r,t)}(x) }{\tau \circ h^{\tau}_t \circ \phi^X_r(x)}\diff r \right\rvert \leq & \frac{2}{t} \left\lvert \int_{0}^{e^su(x_s,t) - u(x,t)} (\tau f) \circ h_r(h^{\tau}_t(x)) \diff r\right\rvert \\
&+ \sigma \frac{C_\tau'}{t} \sup_{r\in [0,\sigma]} \left\lvert \int_{0}^{e^ru(x_r,t) - u(x,t)} (\tau f) \circ h_\ell (h^{\tau}_t(x)) \diff \ell \right\rvert.
\end{split}
\end{equation*}
We consider two possible cases: if $r \in [0,\sigma]$ is such that $|e^ru(x_r,t) - u(x,t)|\leq t_0 =\sqrt{t}$, then, clearly,
$$
\left\lvert \int_{0}^{e^ru(x_r,t) - u(x,t)} (\tau f) \circ h_\ell (h^{\tau}_t(x)) \diff \ell \right\rvert \leq \|\tau f\|_\infty  \sqrt{t}.
$$
Otherwise, if $|e^ru(x_r,t) - u(x,t)| > t_0$, then, since by assumption $h^{\tau}_t(x) \notin E(\tau f, t_0)$, we have
\begin{equation*}
\begin{split}
\left\lvert \int_{0}^{e^ru(x_r,t) - u(x,t)} (\tau f) \circ h_\ell (h^{\tau}_t(x)) \diff \ell \right\rvert &\leq {\widetilde B} (S(\tau f) + \|\tau f\|_\infty) |e^ru(x_r,t) - u(x,t)|^{1-\gamma} \\
&\leq {\widetilde B}m_\tau (e+1)  (S(\tau f) + \|\tau f\|_\infty) t^{1-\gamma}.
\end{split}
\end{equation*}
It is possible to find a Sobolev norm ${\widehat S}$ such that $S(\tau f) \leq {\widehat S}(f){\widehat S}(\tau)$ (see, e.g., \cite[\S2.2]{BEG}). Therefore, in either of the previous cases, we conclude
$$
|A_{t,s}f(x)| \leq B_\tau' ({\widehat S}(f) + \|f\|_\infty + \|Xf\|_\infty)t^{-\gamma},
$$
for a constant $B_\tau'$ which depends on $\tau$ only.
\end{proof}

\begin{proof}[{End of the proof of Theorem \ref{thm:main}}]
Since $f,g \in \mathscr{C}^{\infty}_c(M)$, then $f,g \in L^2(M, \mu^\tau)$. 
Without loss of generality, we can assume that $f$ has zero average with respect to $\mu^\tau$, namely $\mu^\tau(f) = \mu(\tau f) =0$. 
By Lemma \ref{thm:mix_via_s} and \eqref{eq:L2norm_A}, we have
\begin{equation*}
\begin{split}
\left\lvert \int_M f \circ h^{\tau} \cdot g \diff \mu^{\tau} \right\rvert &\leq \left( \|\tau g \|_2 + \|X(\tau g) \|_2 \right) \frac{ 1 }{\sigma} \sup_{s \in [0,\sigma]} \|A_{t,s} f\|_2 \\
&\leq \left( \|\tau g \|_2 + \|X(\tau g) \|_2 \right) \left( (C_v+m_\tau^2)\|f\|_{\infty} t^{-\gamma/2} + t^{\gamma/2} \sup_{x \in M\setminus E(t)} |A_{t,s}f(x)| \right).
\end{split}
\end{equation*}
By Proposition \ref{thm:bound_on_A}, we conclude
$$
\left\lvert \int_M f \circ h^{\tau} \cdot g \diff \mu^{\tau} \right\rvert \leq B_\tau \widetilde{S}(f) \widetilde{S}(g) t^{-\gamma/2},
$$
where $\widetilde{S}(\cdot ) = \widehat{S}(\cdot ) + \|\cdot\|_\infty + \|X( \cdot)\|_\infty$ and $B_\tau$ is a constant depending on $\tau$ only.
\end{proof}

\subsection*{Acknowledgements}
I would like to thank Mauro Artigiani, Giovanni Forni, and Corinna Ulcigrai for several useful discussions and for their comments on a previous version of the paper. This research was partially funded by the Australian Research Council.

\end{document}